\documentclass{amsart}

\usepackage{pinlabel} 
\usepackage{amsmath}
\usepackage{amsfonts}
\usepackage{amssymb}
\usepackage{mathtools}
\usepackage{amscd}
\usepackage{amsthm}
\usepackage[all,cmtip]{xy}
\usepackage{anysize}
\usepackage{epsfig}
\usepackage[colorinlistoftodos]{todonotes}
\usepackage{xcolor}
\usepackage[utf8]{inputenc}

\usepackage[normalem]{ulem}


\newtheorem{theorem}{Theorem}[section]
\newtheorem{proposition}[theorem]{Proposition}
\newtheorem{corollary}[theorem]{Corollary}
\newtheorem{lemma}[theorem]{Lemma}

\theoremstyle{definition}
\newtheorem{problem}[theorem]{Problem}

\newtheorem{example}[theorem]{Example}
\newtheorem{remark}[theorem]{Remark}



\newcommand{\dbC}{\mathbb{C}}

\newcommand{\dbZ}{\mathbb{Z}}


\newcommand{\calF}{{\mathcal F}}

\newcommand{\calG}{{\mathcal G}}


\newcommand{\orf}[1]{\Or(#1,\calF)}
\newcommand{\orgf}{\Or(G,\calF)}
\newcommand{\org}{\Or(G)}

\newcommand{\triv}{\textsc{Tr}}
\newcommand{\fin}{\textsc{Fin}}
\newcommand{\vcyc}{\textsc{Vcyc}}
\newcommand{\all}{\textsc{All}}

\newcommand{\func}[3]{#1\colon #2\to#3}
\newcommand{\inv}[1]{ #1^{-1}}

\newcommand{\nbeq}{\begin{equation}}
\newcommand{\neeq}{\end{equation}}
\newcommand{\beq}{\begin{equation*}}
\newcommand{\eeq}{\end{equation*}}

\newcommand{\adamson}[4]{H_{#1}([#2:#3];#4)}
\newcommand{\takasu}[4]{H_{#1}(#2,#3;#4)}

\newcommand{\CC}[2][*]{\mathbf{#2}_{#1}}

\newcommand{\homo}[1][\bullet]{\ensuremath{H_{#1}}}

\newcommand{\Hom}[1][]{\mathrm{Hom}_{#1}}
\newcommand{\Tor}[2][]{\mathrm{Tor}^{#1}_{#2}}
\newcommand{\Ext}[2][]{\mathrm{Ext}_{#1}^{#2}}



\DeclareMathOperator{\id}{\mathrm{Id}}
\DeclareMathOperator{\gmap}{map_\mathit{G}}
\DeclareMathOperator{\lmodu}{\mathbf{--mod}}
\DeclareMathOperator{\rmodu}{\mathbf{mod--}}

\DeclareMathOperator{\abe}{\mathit{Ab}}
\DeclareMathOperator{\spaces}{\mathit{Top}}
\DeclareMathOperator{\chain}{\mathbf{Ch}}

\DeclareMathOperator{\res}{res}
\DeclareMathOperator{\ind}{ind}
\DeclareMathOperator{\coind}{coind}
\DeclareMathOperator{\Or}{Or}



\newcommand{\defeq}{\vcentcolon=}


\renewcommand{\emph}[1]{\textbf{#1}}

\begin{document}

\makeatletter
\providecommand\@dotsep{5}
\def\listtodoname{List of Todos}
\def\listoftodos{\@starttoc{tdo}\listtodoname}
\makeatother

\title[Relative group homology theories with coefficients]{Relative group homology theories with coefficients and the comparison homomorphism}

\author{Jos\'e Antonio Ariciniega-Nev\'arez}
\address{Divisi\'on de Ingenier\'ias, Universidad de Guanajuato}
\email{ja.arciniega@ugto.mx}

\author{Jos\'e Luis Cisneros-Molina}
\address{Unidad Cuernavaca del Instituto de Matem\'aticas, National University of Mexico, Mexico 62210}

\email{jlcisneros@im.unam.mx}
\thanks{The second author is Regular Associate of the Abdus Salam International Centre for Theoretical Physics, Trieste, Italy. Supported by project CONACYT~253506.}

\author{Luis Jorge S\'anchez Salda\~na }
\address{Unidad Cuernavaca del Instituto de Matem\'aticas, National University of Mexico, Mexico 62210}

\email{luisjorge@im.unam.mx}

\thanks{The third named author was supported by  DGAPA-UNAM postdoctoral grant.}

\subjclass[2010]{55N25, 55N35, 55N91, 55R40, 20J06}

\date{}

\keywords{relative group (co)homology, Adamson relative (co)homology, Takasu relative (co)homology, orbit category, bredon (co)homology, malnormal subgroup}

\begin{abstract}
Let $G$ be a group, let $H$ be a subgroup of $G$ and let $\Or(G)$ be the orbit category. In this paper we extend the definition of the relative group homology theories of the pair $(G,H)$ defined by Adamson and Takasu 
to have coefficients in an $\Or(G)$-module. There is a canonical comparison homomorphism defined by Cisneros-Molina and Arciniega-Nevárez from Takasu's theory to Adamson's theory. We give a necessary and sufficient condition 
on the subgroup $H$ for which the comparison homomorphism is an isomorphism for \textit{all} coefficients. We also use the L\"uck-Wiermann construction to introduce a long exact sequence for Adamson homology. Finally, 
we provide some examples of explicit computations for the comparison homomorphism.
\end{abstract}

\maketitle

\section{Introduction}\label{sec:intro}
Let $G$ be a group, let $H$ be a subgroup of $G$, and let $M$ be an arbitrary $G$--module. In the literature there are two \textit{relative group (co)homology theories}   associated to the pair $(G,H)$ with coefficients in $M$. Both reduce to the classical group (co)homology of $G$ when $H$ is the trivial subgroup. The first relative group (co)homology theory, denoted by $H_*([G:H];M)$, was defined by Adamson \cite{Adamson:CTNNSNNF} and later Hochschild \cite{Hochschild:RHA} interpreted 
Adamson's theory in terms of relative homological algebra. The second relative group (co)homology theory, denoted by $H_*(G,H;M)$, was introduced by Massey \cite[Problem 22]{massey:SPATTFB} and later studied by 
Takasu in \cite{Takasu-OTCOFITHA} and \cite{takasu-RHARCTOG}. The reader can look in \cite{AC17} for more details about the history and references. From now on, we call the former \textit{Adamson relative 
group (co)homology theory} and the latter \textit{Takasu relative group (co)homology theory}. 

To the best of our knowledge Adamson relative homology theory and Takasu relative homology theory were compared for the first time in  \cite{AC17}.  In fact, in \cite[Section~7]{AC17} Arciniega-Nevarez and Cisneros-Molina defined a canonical homomorphism from Takasu homology theory to Adamson homology theory $\varphi:H_*(G,H;\dbZ)\to H_*([G:H];\dbZ)$, which from now on we call \textit{the comparison homomorphism}. They gave a  sufficient condition for $\varphi$ to be an isomorphism: if $H$ is a malnormal subgroup of $G$ then, for all $i\geq 1$, the comparison homomorphism 
$\varphi\colon\adamson{i}{G}{H}{\dbZ}\to \takasu{i}{G}{H}{\dbZ}$ is an isomorphism. 

Recall that classical group (co)homology can be defined topologically as the (co)homology groups of the classifying space $BG$ of $G$ with local coefficients associated to $M$. In analogy with this, 
taking $M$ as a \textit{trivial} $G$-module,  topological definitions of Adamson and Takasu relative (co)homology theories were given \cite{Blowers:CSPR,takasu-RHARCTOG,AC17}.
The topological definition of Takasu relative group (co)homology extends without problem to consider coefficients in an \textit{arbitrary} $G$-module $M$ taking local coefficients associated to $M$, but the topological definition of 
Adamson relative group (co)homology only works with \textit{trivial} coefficients. So, a natural question is if it is possible to give a topological definition for Adamson relative group (co)homology 
with ``more general coefficients'', such that, when taking trivial coefficients it coincides with the topological definition given in \cite{AC17}, and when taking coefficients in an arbitrary $G$-module, it 
coincides with Adamson's original algebraic definition.

The solution to the questions above mentioned is to define both Adamson and Takasu relative homology theories with coefficients in an $\Or(G)$--module, 
using Bredon (co)homology, where $\Or(G)$ is the orbit category. Also, once we consider more general coeffcients, we prove the converse of  Arciniega-Nevarez and Cisneros-Molina's theorem (see Theorem \ref{comparisontheorem}): the comparison homomorphism is an isomorphism for \textit{all} $\Or(G)$-module coefficients if and only if $H$ is a malnormal subgroup of $G$. It is worth saying that the comparison of both relative (co)homology theories is not only motivated by curiosity. Recently, Adamson and Takasu theories have been used to construct invariants of complete hyperbolic $3$-manifolds of finite volume \cite{Zickert:VCSIR,Arciniega-Cisneros:IH3MRGH} and in relation with quandle homology \cite{Inoue-Kabaya:QHCV,Nosaka:QTP}. For instance, in \cite[Lemma~7]{Inoue-Kabaya:QHCV} we can see a natural application of the comparison homomorphism.

The main feature of Adamson relative (co)homology is that, provided $H$ is a normal subgroup of $G$, it is isomorphic to the classical group (co)homology of $G/H$ (see \cite[Theorem~3.2] {Adamson:CTNNSNNF}, 
\cite[Section~6]{Hochschild:RHA} and \cite[Corollary~4.29]{AC17}). For Takasu relative (co)homology this is no true in general. The main property of Takasu relative (co)homology is 
a long exact sequence, which (via the topological definition) corresponds to that of a pair of topological spaces in singular homology (\cite[Proposition~2.3]{takasu-RHARCTOG}). Such a sequence is not available in 
Adamson homology. We show that the extension of Adamson and Takasu relative homologies presented here still satisfy their corresponding property.

As in the classical group homology theory, there are descriptions of $\adamson{i}{G}{H}{M}$ and $\takasu{i}{G}{H}{M}$ using derived functors once we are able to define exact 
sequences and projective resolutions in a suitable context. This was done in Takasu's and Hochshild's original papers. In \cite[Section 7.1]{AC17}, there is also a description of 
the comparison homomorphism in a purely algebraic setting using derived functors. In the present paper we also describe Adamson and Takasu homology with coefficients in an 
$\Or(G)$-module using the language of derived functors.

The paper is organized as follows. In Section~\ref{sec:T.A} we recall the definitions of Takasu and Adamson relative group homology theories, and we show why the topological definition for Adamson's theory
given in \cite{AC17} does not work for coefficients in an arbitrary $G$--module. In Section~\ref{sec:ooc} we recall the definition of the (restricted) orbit category $\Or(G,\calF)$ as well as the definition of $\Or(G,\calF)$--modules and 
$\Or(G,\calF)$--spaces. Section~\ref{sec:G-ht} is devoted to define Adamson and Takasu homology for a 
pair $(G,H)$ with coefficients in a $\Or(G)$--module $M$, as far as we know these homology theories have not been compared in this context; later on, we define the comparison 
homomorphism and prove in Theorem \ref{comparisontheorem}, a converse of Cisneros-Molina and Arciniega-Nev\'arez theorem \cite[Theorem~7.13]{AC17} for the comparison homomorphism. In Section~\ref{sec:derived-functors} we provide descriptions 
of Adamson and Takasu homology using derived functors in suitable categories, as well as an algebraic description of the comparison homomorphism.  Section~\ref{LWconstruction} has as a main goal to describe a long exact sequence for Adamson homology, analogue to that of Takasu homology, 
using the L\"uck--Weiermann construction from \cite{LW12}. Finally, in Section~\ref{sec:exa} we provide explicit computations of the comparison homomorphism; in particular, 
we show that the comparison homomorphism associated to $(C_4,C_2)$ (with constant coefficients) is not an isomorphism, therefore, at least to our knowledge,  in this concrete case, the isomorphism 
between Adamson and Takasu homology seems to be a coincidence.

Every result  and every construction in this paper has its cohomological version. 

\section{Adamson and Takasu relative group homology theories}\label{sec:T.A}
In this article, we always consider \emph{groups} as topological spaces endowed with the \emph{discrete topology}. Throughout this section $G$ will denote a group and $H$ a subgroup of $G$.

In this section we recall the definitions of Takasu and Adamson relative group homology theories for the pair $(G,H)$. 
The topological definition for Takasu relative group homology
given in \cite[\S 5.1]{AC17} for \textit{trivial} coefficients, also works with coefficients in an arbitrary $G$--module $M$ taking homology with local coefficients associated to $M$.
We also recall the topological definition of Adamson relative group homology given in \cite[\S 4.3]{AC17} for \textit{trivial} coefficients, and we see why it \textit{cannot} be extended
to use coefficients in an arbitrary $G$-module $M$ taking homology with local coefficients associated to $M$.

\subsection{$G$-spaces}

Let $X$ and $X'$ be  two $G$--spaces. We denote by $\gmap(X,X')$ the set of $G$--maps from $X$ to $X'$. Let $\pi\colon K\to G$ be a homomorphism of groups, we denote $\res_\pi X$ the space $X$ with the natural $K$-action induced by $\pi$. If $\pi$ is the inclusion we denote the respective space by $\res_K^G$. If $Y$ is an $H$--space, the \emph{induction} $\ind_H^G Y$ is the $G$-space $G\times Y$  divided by the $H$--action $(g,x)\cdot h=(gh,h^{-1} x)$.

\subsection{Classifying spaces for families}

Let $G$ be a group, a \emph{family of subgroups of $G$} is a nonempty collection $\mathcal{F}$ of subgroups of $G$ which is closed under conjugation and taking subgroups. Examples of families are $\triv$ the family containing only the trivial subgroup, and $\fin$, 
$\vcyc$, $\all$ the families of finite, virtually-cyclic and all subgroups respectively.

In the present work, we are interested in the family $\calF(H) =\lbrace K\leq G | g^{-1}Kg\leq H \text{ for some } g\in G \rbrace$ of subgroups of $G$ that we call the family generated by $H$. Given a family $\mathcal{F}$ of subgroups of $G$, define $\calF\cap H = \lbrace L\cap H ~|~ L\in \calF \rbrace.$

Given a group $G$ and a family of subgroups $\calF$, a model for the \emph{classifying space} $E_\mathcal{F} G$ is a $G$-CW--complex $X$ satisfying:
\begin{itemize}
\item The isotropy group $G_x$ belongs to $\mathcal{F}$, for all $x\in X$, and
\item the fixed point set $X^H$ is contractible for every $H\in\calF$.
\end{itemize} 
 
Equivalently, a model for $E_\mathcal{F} G$ is a terminal object in the $G$--homotopy category of $G$-CW-complexes with isotropy in $\calF$, sometimes called $G$-$\calF$-CW--complexes. 
Given a group $G$ and a family of subgroups $\mathcal{F}$, 
there always exists a model for $E_\mathcal{F} G$ and is unique up to $G$--homotopy equivalence \cite[Theorem 1.9]{Lu05}.

\begin{remark}\label{rem:EG}
When $\calF=\triv$, we have that $E_{\calF}G$ corresponds to the \textit{universal bundle} $EG$ of $G$.
The $G$--orbit space of $EG$ is the classical classifying space $BG$ of $G$. 
In analogy with  $BG$, we denote by $B_{\calF}G$ the $G$--orbit space of $E_{\calF}G$.
\end{remark}

\subsection{Takasu relative group homology}

Denote by $G\lmodu$ (resp.~$\rmodu G$) the category 
of left (resp.~right) $G$--modules, and given two $G$--modules $M$ and $M'$ we denote by $\gmap(M,M')$ the set of $G$--module homomorphisms from $M$ to $M'$.

In \cite[Ch.~I \S1]{takasu-RHARCTOG} Takasu defines the $G$--module $I_{(G,H)}(\dbZ)$ to be the kernel of the augmentation homomorphism $\dbZ[G/H]\to \dbZ$. 

Let $M$ be an \textit{arbitrary} $G$--module. The \emph{Takasu relative homology of the pair $(G,H)$ with coefficients in $M$} is given by
\begin{equation}\label{eq:Takasu.Tor}
\begin{aligned}
\homo[n](G,H;M)&= \Tor[G]{n-1}(I_{(G,H)}(\dbZ),M).
\end{aligned}
\end{equation}

The classifying space $BH$ can be regarded as a
subspace of the classifying space $BG$. In fact, there is a map $\func{\iota}{BH}{BG}$ induced by the inclusion of $H$ in $G$; the \emph{mapping cylinder} 
$\mathrm{Cyl}(\iota)$ of $\iota$ is a model for $BG$ since it is homotopically equivalent to $BG$ and it clearly contains $BH$ as subspace. 
Then, an alternative definition for the \emph{Takasu relative homology with coefficients in $M$} is defined as the homology of the pair of spaces $(BG,BH)$ with local coefficients in $M$. That is
\begin{equation}\label{eq:Takasu.top.def}
\begin{aligned}
 \homo[n](G,H;M)&=\homo[n](BG,BH;M).
\end{aligned}
\end{equation}
It is clear that when $H$ is the identity subgroup we recover the (reduced) homology of the group $G$ with coefficients in $M$.

\begin{problem}\label{prob:cone}
When $M$ is a \textit{trivial} $G$-module one can consider the \emph{mapping cone} $\mathrm{Cone}(\iota)=\mathrm{Cyl}(\iota)/BH$ of $\iota$, so taking reduced homology we have that
\begin{align*}
 \homo[n](G,H;M)&=\homo[n](\mathrm{Cyl}(\iota),BH;M)=\tilde{H}_n(\mathrm{Cone}(\iota);M).
\end{align*}

But when $M$ is an \textit{arbitrary} $G$--module we cannot use $\mathrm{Cone}(\iota)$. Using Seifert--van Kampen Theorem to compute the fundamental group of 
$\mathrm{Cone}(\iota)$ we have that $\pi_1(\mathrm{Cone}(\iota))=G/N$ with $N$ the normal subgroup generated by $H$. So in order to compute $H_n(\mathrm{Cone}(\iota);M)$ the homology of $\mathrm{Cone}(\iota)$ with
local coefficients associated to $M$,  the module $M$ has to be a $G/N$--module, but in defining $\homo[n](BG,BH;M)$ we can use any $G$--module! 
\end{problem}

\subsection{Adamson relative group homology}

Consider the following chain complex $(\CC{C}(G/H),\partial_*)$ of $G$--modules: let $\CC[n]{C}(G/H)$ be 
the free abelian group generated by the ordered $(n+1)$--tuples of elements of $G/H$; define the $i$--th face homomorphism $d_i\colon\CC[n]{C}(G/H)\to\CC[n-1]{C}(G/H)$ by
$d_i(g_0H,\dots,g_nH)=(g_0H,\dots,\widehat{g_iH},\dots,g_nH)$, where $\widehat{g_iH}$ denotes deletion, and the boundary homomorphism $\partial_n\colon\CC[n]{C}(G/H)\to\CC[n-1]{C}(G/H)$ by
$\partial_n=\sum_{i=0}^n (-1)^id_i$. 
We have that the augmented complex
\begin{equation}\label{eq:crpr}
\xymatrix{\cdots\ar[r]& \CC[n]{C}(G/H)\ar[r]&\cdots\ar[r]& \CC[2]{C}(G/H)
\ar[r]^{\partial_2}&\CC[1]{C}(G/H)\ar[r]^{\partial_1}&\CC[0]{C}
(G/H)\ar[r]^{\varepsilon}&\dbZ\ar[r]&0,}
\end{equation}
is acyclic \cite[Proposition~3.2]{AC17}. Hence given an \textit{arbitrary} $G$--module $M$ define $$ \CC{B}(G/H;M)=\CC{C}(G/H)\otimes_{\dbZ[G]}M. $$

The \emph{Adamson relative homology with coefficients in $M$} is given by
\begin{equation}\label{eq:Adamson.alg.def}
\begin{aligned}
 \homo[n]([G:H];M)&=\homo[n](\CC{B}(G/H;M)).
\end{aligned}
\end{equation}
This is the definition given by Adamson in \cite[\S3]{Adamson:CTNNSNNF}. It is clear that when $H$ is the identity subgroup the complex $\CC{C}(G/H)$ 
is the canonical free $G$--resolution of $\dbZ$ and we recover the classical group homology. Instead of $G/H$, we can use any other isomorphic $G$--set $X_{(H)}$, in this case, 
we call $\CC{C}(X_{(H)})$ \emph{the standard complex of $(G,H)$}, then we can see Adamson relative group homology as a particular case of the homology of a permutation 
representation defined by Snapper \cite{Snapper:CPRISS}. Hochschild \cite{Hochschild:RHA} interpreted 
Adamson's theory in terms of relative homological algebra by proving that the complex \eqref{eq:crpr} is a relative projective resolution of $\dbZ$ \cite[Proposition~4.11]{AC17}.

There is also a topological definition, given in \cite{AC17}, when $M$ is a \textit{trivial} $G$--module. Consider the family
of subgroups $\mathcal{F}(H)$ generated by $H$. Then
\begin{align*}
\homo[n]([G:H];M)&= \homo[n](B_{\calF (H)}G;M).
\end{align*}

\begin{problem}\label{problemtopadamson}
To take coefficients in an \textit{arbitray} $G$--module $M$ as in the algebraic definition, we cannot simply take homology with local coefficients associated to the module $M$. 
There is a problem analogous to Problem~\ref{prob:cone} in the case of Takasu's theory; to take homology with local coefficients associated to a module $M$, the module has to be a 
$\pi_1(B_{\calF (H)}G)$--module. Since $\pi_1(B_{\calF (H)}G)=G/N$  where $N$ is the normal subgroup of $G$ generated by $H$ \cite[Proposition~4.23]{AC17},
given a $G$--module $M$ we need to modify it in order to use it as coefficient. But in the algebraic definition we can take any $G$--module $M$! 
\end{problem}

In the present article, we give a suitable topological definition using Bredon homology with coefficients in any module over the orbit category to solve (in a more general setting) 
Problems~\ref{prob:cone} and \ref{problemtopadamson}.

In \cite[Section~7]{AC17} the authors defined the so-called comparison homomorphism $\varphi:H_*(G,H;\dbZ)\to H_*([G:H];\dbZ)$. We prove that there is a comparison homomorphism in our setting. We have the following natural questions that we address in the present work.

\begin{problem}\label{problem:comparison homomorphism}
Under what conditions $\varphi:H_*(G,H;\dbZ)\to H_*([G:H];\dbZ)$ is an isomorphism? If the comparison homomorphism is not an isomorphism, is there a good description for the kernel and the cokernel?
\end{problem}

\section{Objects over the orbit category}\label{sec:ooc}

In this section we introduce the (restricted) orbit category, as well as some objects over the orbit category. All the material of this section can be found in great detail  in \cite{MV03}, \cite{tomDieck:TransGrp} and \cite{Lu89}. Throughout this section $G$ will denote a group and $\calF$ a family of subgroups of $G$. 

The \emph{restricted orbit category} $\orf{G}$ is the category whose objects are homogeneous spaces (also called orbits) $G/H$ with 
$H\in \calF$, and whose morphisms are $G$--maps. The set of $G$--maps between the orbits $G/H$ and $G/K$ is denoted by $\gmap(G/H, G/K)$. We denote $\Or(G,\all)$ simply by $\org$. Note that, for every family $\calF$, we have a canonical inclusion $\Or(G,\calF)\hookrightarrow \Or(G)$.

It is easy to see that every element in $\gmap(G/H,G/K)$ is of the form $\func{R_a}{G/H}{G/K}$,
$gH \mapsto ga^{-1}K$, provided $aHa^{-1} \subseteq K$. Also $R_a=R_b$ if and only if $ab^{-1} \in K$, and $R_b \circ R_a= R_{ba}$, whenever the composition makes sense.

\subsection{Modules over the orbit category}

A covariant (resp. contravariant) \emph{$\orf{G}$--module} is a covariant (resp. contravariant) functor from  $\orf{G}$ to the category of abelian groups. 
A \emph{morphism} $M \to N$ of $\orf{G}$--modules of the same variance is a natural transformation between the underlying functors. We denote by $\Hom[\orf{G}](M,N)$ the set of all morphisms $M \to N$. We denote by $\orgf\lmodu$ (resp. $\rmodu\orgf$) the category of covariant (resp. contravariant) $\orgf$--modules. These are abelian categories \cite[Proposition~IX~3.1]{MacLane:Homotopy} with enough projectives \cite[p.~10]{MV03}. 
  
\begin{example}\label{z-trasformation}
Denote the free abelian group with basis $\gmap(G/H, G/K)$, by $\dbZ[G/H, G/K]$.
So, $\dbZ[G/H,-]$ and $\dbZ[-, G/K]$ respectively define a covariant and a contravariant $\orf{G}$--\-mo\-du\-les. The module $\dbZ[-, G/K]$ happens to be a \emph{free} contravariant $\orgf$--module.
\end{example}

\begin{remark}\label{naturality-modules}
The categories $\rmodu \Or(G,\triv)$ (resp. $\Or(G,\triv)\lmodu$) and
 $\rmodu G$ (resp. $G\lmodu$) are canonically isomorphic. To the $\Or(G,\triv)$--module $M$ corresponds the $G$--module $M(G/I)$, where $I$ denotes the trivial subgroup of $G$.
\end{remark}

Given a contravariant $\orf{G}$--module $M$ and a covariant $\orf{G}$--module $N$, there is a \emph{tensor product} $M\otimes_{\orf{G}} N$, which is a suitable abelian group (see \cite[p.~14]{MV03}). Hence we have a tensor product functor $-\otimes_{\orf{G}} N$ from the category of contravariant $\orf{G}$--modules to the category of abelian groups. For every $\orgf$--module $M$ with a suitable variance and for all $H\in \calF$, we have a \emph{Yoneda-type isomorphism} (see \cite[p.~9, p.~14]{MV03})
\begin{equation}\label{Yoneda}
\begin{aligned}
\dbZ[-,G/H]\otimes_{\orgf} M&=M(G/H).
\end{aligned}
\end{equation}

\subsection{Restriction, induction and coinduction for modules over the orbit category}
Let  $\varphi: H \to G$ be a homomorphism of groups. Denote $\varphi^* \calF$ the family of subgroups of $H$ that are mapped by $\varphi$ to a group in $\calF$. We have a natural functor
\begin{equation*}
\overline{\varphi}:\Or(H,\varphi^*\calF)\to \Or(G, \calF))
\end{equation*}

Given an $\Or(G,\calF)$--module $M$,  the \emph{restriction functor} $\res_\varphi(M)$ is the $\Or(H,\calF\cap H)$--module of the same variance as $M$, defined by
$\res_\varphi(M)= M\circ \bar \varphi$. It is no difficult to see that $\res_\varphi$ defines covariant functors
\begin{gather*}
\res_\varphi\colon \rmodu\Or(G,\calF)\to \rmodu\Or(H,\varphi^*\calF),\\
\res_\varphi\colon \Or(G,\calF)\lmodu\to\Or(H,\varphi^*\calF)\lmodu.
\end{gather*}

We also have induction and coinduction functors (see  \cite{Lu89} for the definitions)
\begin{gather*}
\ind_\varphi\colon\Or(H,\varphi^*\calF)\lmodu \to\Or(G,\calF)\lmodu,\\
\coind_\varphi\colon\rmodu\Or(H,\varphi^*\calF) \to\rmodu\Or(G,\calF).
\end{gather*}
such that all the usual adjoint properties hold. If $\varphi$ is an inclusion, then we use the notation $\res_H^G$, $\ind_H^G$, and $\coind_H^G$.

\subsection{Coinvariants}
Given a $G$--module $M$, we define the \emph{coinvariants functor} $\underline{M}=\dbZ[-]\otimes_{\dbZ G}M$. We have that $\underline{M}(G/K)$ are the $K$-coinvariants $M_K$ of $M$.
This defines a covariant functor from the category of left $G$--modules to the category of covariant $\Or(G,\calF)$--modules.  

\subsection{Spaces over the orbit category}\label{sssec:res.ind.sp}
A covariant (resp. contravariant) \emph{$\orf{G}$--space} is a covariant (resp. contravariant) functor $\orf{G}\to \spaces$, where $\spaces$ is the category of topological spaces.

If $X$ is a $G$--space we can define the \emph{fixed point} contravariant $\Or(G,\calF)$--space $\overline{X}=\gmap(-,X)$. We have that $\overline{X}(G/K)=X^K$.
This defines a functor from the category of $G$--spaces to the category of $\Or(G,\calF)$-spaces.

\section{Adamson and Takasu theories using Bredon homology}\label{sec:G-ht}

In this section we introduce Bredon homology for $G$-CW--complexes. The material of this section can be found in great detail in \cite{MV03}, \cite{SG05}. 
In this section we define  the relative group homology theories of Adamson and Takasu with coefficients in an $\Or(G)$--module, as well as the comparison homomorphism between them. 
Such definitions generalize those existing in the literature.

\subsection{Bredon homology}
If $X$ is a $G$-CW--complex then we have associated the fixed point functor, i.e. a contravariant $\org$--space $\overline{X}\colon \Or(G)\to \spaces$, 
and the contravariant functor $\CC{C}(X)(-):\Or(G) \to \chain(\abe)$, which is the composition of $\overline{X}$  with the cellular chain complex functor (Subsection~\ref{sssec:res.ind.sp}). 
Let $M$ be a covariant $\org$-module, we can define a chain complex of abelian groups by considering the tensor product 
\begin{equation*}
\CC{C}(X;M)\defeq \CC{C}(X)\otimes_{\org} M.
\end{equation*}

We describe $\CC[i]{C}(X;M)$ in more detail, for further details about the boundary morphisms see \cite{Bredon:ECT}. Let $\Delta_i$ be the set of $i$--cells of $X$, since $G$ acts cellularly on $X$, 
the set $\Delta_i$ is a $G$--set. Let $K_{\sigma}$ be the isotropy group of $\sigma \in \Delta_i$. Let $\Sigma_i$ be a set of representatives for the $G$--orbits in $\Delta_i$. Then, using the distributive property of the tensor product and the Yoneda-type isomorphism  
\begin{align*}
\CC[i]{C}(X) \otimes_{\org} M&=\bigoplus_{\sigma\in \Sigma_i}(\dbZ[-,G/K_{\sigma}]\otimes_{\org} M)\\ &=\bigoplus_{\sigma\in \Sigma_i}M(G/K_{\sigma}).
\end{align*}

By definition, the \emph{Bredon homology} of the $G$-CW--complex $X$ with coefficients in the $\org$--module $M$ (with suitable variance) is
\begin{gather*}
 H_*^G(X;M) = H_*(\CC{C}(X)\otimes_{\org} M).
\end{gather*}
In a complete analogous way, for a $G$-CW-pair $(X,Y)$.

\begin{example}\label{gmodulecoefficients}
If $M$ is a $G$--module, we have the coinvariants functor $\underline{M}$. Hence, we can define the \emph{homology of 
the $G$-CW--complex $X$ with coefficients in the $G$--module $M$} by 
\begin{gather*}
H_*^G(X;M)=H_*^G(X;\underline{M}).
\end{gather*}
\end{example}

\begin{example}\label{bredon:coef:gmodule}
Let $X$ be a $G$-CW--complex and $M$ a $G$--module. Then the chain complex of abelian groups $\CC{C}(X;\underline{M})\defeq \CC{C}(X)\otimes_{\org} \underline{M}$, is 
isomorphic to the tensor product $ \CC{S}(X)\otimes_{\dbZ[G]} M $, where $\CC{S}(X)$ is the classical cellular chain complex of $X$ with the induced action of $G$. In fact, 
this can be easily seen by decomposing $\mathbf{S}_{i}(X)$ as a direct sum of $G$--modules of the form $\dbZ[G/K]$, the Yoneda isomorphism (\ref{Yoneda}), and the isomorphism $\dbZ[G/K]\otimes_{\dbZ[G]}M=M_K$:
\begin{align*}
\CC{S}(X)\otimes_{\dbZ[G]} M &= \bigoplus_{\sigma\in \Sigma_*}(\dbZ[G/K_{\sigma}]\otimes_{\dbZ[G]} M)\\ &=\bigoplus_{\sigma\in \Sigma_*}M_{G/K_{\sigma}}\\
&=\bigoplus_{\sigma\in \Sigma_*}(\dbZ[-,G/K_\sigma]\otimes_{\org}\underline{M}).
\end{align*}
where $\Sigma_*$ is, as before, a set of representatives for the $G$–orbits in $\Delta_i$. The computation of the boundary homorphism is straightforward.
\end{example}

\begin{example}\label{bredongmodules}
If $A$ is a trivial $G$--module (i.e. $ga=a$ for all $g\in G$ and $a\in A$) the functor of coinvariants $\underline{A}$ asociated to $A$ is the constant $\Or(G)$--module given by $\underline{A}(G/K)=A$ for 
all subgroups $K$, and $\underline{A}(R_a)=Id$ for every morphism in $\org$. By Example~\ref{bredon:coef:gmodule} Bredon homology of $X$ with coefficients in $\underline{A}$ recovers 
the cellular homology of $X/G$ with coefficients in the trivial $G$--module $A$.
\begin{gather*}
H^G_*(X;A)=H_*^G(X;\underline{A})=H_*(X/G;A).
\end{gather*}
\end{example}

\begin{example}\label{localcoefficienthomology}
Let $(X,Y)$ be a pair of $G$-CW--complexes with $X$ simply-connected and with free $G$--action, and let $M$ be a covariant $\org$--module. Then $H_*^G(X,Y;M)$ is isomorphic 
to the homology of the pair $(X/G,Y/G)$ with local coefficients associated to the $G$--module $M(G/I)$. In fact, since $X$ is simply-connected we know that 
the canonical projection $X\to X/G$ is the universal covering projection so that $G\cong \pi_1(X/G)$.
\end{example}

The following theorem will be useful in the next section in order to establish the main properties of Adamson and Takasu relative homology theories.

\begin{lemma}\label{inductivestructureforbredon}
Let $G$ be a group and $H$ a subgroup of $G$. Consider a covariant $\Or(G)$--module $M$. Then, for any $H$-CW--complex $X$, there are natural isomorphisms
\begin{gather*}
H_*^H(X;\res_H^G(M))\cong H_*^G(\ind_H^G X;M).
\end{gather*}
\end{lemma}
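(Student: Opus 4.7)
The plan is to reduce the statement to the standard Frobenius-type adjunction between induction and restriction for modules over the orbit category, after identifying the cellular chain functor of an induced $G$-CW--complex.

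First I would write both sides as tensor products. By definition,
\begin{equation*}
H_*^H(X;\res_H^G M) = H_*\bigl(\CC{C}(X)\otimes_{\Or(H)} \res_H^G M\bigr),
\end{equation*}
\begin{equation*}
H_*^G(\ind_H^G X;M) = H_*\bigl(\CC{C}(\ind_H^G X)\otimes_{\Or(G)} M\bigr),
\end{equation*}
where $\CC{C}(X)$ is the contravariant $\Or(H)$-chain complex $H/L\mapsto C_*(X^L)$ and similarly for $\ind_H^G X$.

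Next I would prove the key chain-level identification
\begin{equation*}
\CC{C}(\ind_H^G X)\;\cong\;\coind_H^G\CC{C}(X)
\end{equation*}
as contravariant $\Or(G)$-chain complexes (equivalently, an induction isomorphism on the covariant side if one dualises). To see this, recall that the cells of $\ind_H^G X = G\times_H X$ are in $G$-equivariant bijection with the cells of $X$: an $H$-orbit of $i$-cells in $X$ with isotropy $L\leq H$ contributes a $G$-orbit of $i$-cells in $G\times_H X$ with the same isotropy $L$, now viewed as a subgroup of $G$. Thus, choosing a set $\Sigma_i$ of $H$-orbit representatives, one obtains
\begin{equation*}
\CC[i]{C}(X)\;\cong\;\bigoplus_{\sigma\in\Sigma_i}\dbZ[-,H/L_\sigma],\qquad \CC[i]{C}(\ind_H^G X)\;\cong\;\bigoplus_{\sigma\in\Sigma_i}\dbZ[-,G/L_\sigma],
\end{equation*}
and under the induction/coinduction functors between $\Or(H)$- and $\Or(G)$-modules one has the standard identification $\dbZ[-,H/L]\leftrightsquigarrow\dbZ[-,G/L]$ (which is essentially a Yoneda computation, after unwinding the definitions in \cite{Lu89}); the boundary maps match because they are determined by the cellular attaching maps, which are preserved by the functor $X\mapsto \ind_H^G X$.

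Finally I would invoke the adjunction formula
\begin{equation*}
\bigl(\coind_H^G N\bigr)\otimes_{\Or(G)} M \;\cong\; N\otimes_{\Or(H)} \res_H^G M,
\end{equation*}
valid for $N$ a contravariant $\Or(H,\calF\cap H)$-module and $M$ a covariant $\Or(G,\calF)$-module (this is the orbit-category version of Frobenius reciprocity / the Shapiro-type formula recorded in \cite{Lu89} and \cite{MV03}). Applying it degreewise to $N=\CC{C}(X)$ yields a natural isomorphism of chain complexes
\begin{equation*}
\CC{C}(\ind_H^G X)\otimes_{\Or(G)} M \;\cong\; \CC{C}(X)\otimes_{\Or(H)} \res_H^G M,
\end{equation*}
and passing to homology gives the claimed isomorphism. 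Naturality in $X$ and in $M$ is inherited from naturality in the chain-level adjunction.

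The main obstacle is the cellular step: one has to be careful to identify $\CC{C}(\ind_H^G X)$ as a functor on all of $\Or(G)$, not just on orbits of the form $G/L$ with $L\leq H$; the point is that for an arbitrary subgroup $J\leq G$, one has $(G\times_H X)^J = \coprod_{gH} X^{g^{-1}Jg\cap H}$ (where the union runs over $gH\in(G/H)^J$), and one must check that this double-coset decomposition is exactly what the induction/coinduction functor on representable $\Or(H)$-modules produces. Once this bookkeeping is done, both identifications above are formal.
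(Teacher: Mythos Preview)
Your approach is correct and is precisely what the paper's one-line proof has in mind: the paper simply cites the adjunction properties of induction and restriction for modules over the orbit category (referring to Davis--L\"uck), and you have unpacked that reference at the chain level.

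One terminological caveat: the functor on contravariant $\Or(H)$-modules that sends the free module $\dbZ[-,H/L]$ to $\dbZ[-,G/L]$ and satisfies
\[
(?\,N)\otimes_{\Or(G)} M \;\cong\; N\otimes_{\Or(H)}\res_H^G M
\]
is the \emph{left} adjoint to restriction, i.e.\ $\ind_H^G$, not $\coind_H^G$. (The paper's Section~3.2 is slightly misleading here, as it only displays $\coind$ on the contravariant side; but in L\"uck's conventions coinduction is the right adjoint, and the tensor formula you need requires the left one.) Your own double-coset description of $(G\times_H X)^J$ in the final paragraph is exactly the colimit formula for $\ind_H^G$ of a contravariant module, so once you replace $\coind$ by $\ind$ throughout, the argument goes through verbatim.
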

\begin{proof}
The proof follows by applying the definition of Bredon homology and the classical adjunction properties of $\res_H^G$ and $\ind_H^G$ (see \cite[Lemma 1.9]{DL98}).
\end{proof}

\subsection{Adamson relative group homology}\label{ssection:Adamsonhomology}
Consider a discrete group $G$, a subgroup $H$ of $G$, and an $\org$--module $M$ (with suitable variance). Recall that $\calF(H)$ is the family of subgroups of $G$ generated by $H$. 
Then we define the \emph{Adamson relative group homology} of the pair $(G,H)$ with coefficients in $M$ by
\begin{gather*}
H_*([G:H];M)\defeq H_*^G(E_{\calF (H)} G;M).
\end{gather*}

\begin{remark}
Note that:
\begin{itemize}
\item By Example~\ref{bredongmodules}, our definition reduces to the definition of Adamson relative group homology with coefficients in a trivial $G$--module given in \cite{AC17}. 
\item For $H$ trivial, we have $\calF(H)=\triv$ and, by Remark~\ref{rem:EG}, we get the universal covering $EG$ of the classical classifying space $BG$ of $G$, we recover the classical homology of $G$ with coefficients in the $G$--module $M(G/I)$.
\item If $M$ is a $G$--module, then we recover the definition of Adamson homology with coefficients in a $G$--module given in \eqref{eq:Adamson.alg.def}  
(see Example \ref{gmodulecoefficients} and Example \ref{bredon:coef:gmodule}).
\end{itemize}
\end{remark}

Now we state the main property of Adamson relative group homology. Roughly speaking, it des\-cribes an excision phenomenon. As a consequence the Adamson homology of $(G,H)$  is the group 
homology of the quotient $G/H$ if $H$ is a normal subgroup of $G$. 

\begin{proposition}\label{prop:3.IT}
Let $N$ be a normal subgroup of $G$ contained in $H$, and let $\pi\colon G\to G/N$ be the quotient projection. Let $M$ be an $\Or(G)$--module. Then, for all $n\geq 0$, we have the following isomorphisms
\begin{align*}
\homo[n]([G:H];M)&\cong \homo[n]([G/N:H/N];\ind_\pi (M)).
\end{align*}
\end{proposition}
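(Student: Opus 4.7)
The plan is to realize $E_{\calF(H)}G$ as the pullback along $\pi$ of a classifying space for $G/N$, and then to use the adjunction between $\res_\pi$ and $\ind_\pi$ at the level of Bredon tensor products.

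First I would verify that $\pi^{*}\calF(H/N)=\calF(H)$: a subgroup $K\leq G$ satisfies $\pi(K)\subseteq \bar g\,(H/N)\,\bar g^{-1}$ for some $\bar g\in G/N$ iff $KN\subseteq gHg^{-1}$ for some lift $g$, and since $N$ is normal with $N\subseteq H$ we already have $N\subseteq gHg^{-1}$, so the condition reduces to $K\subseteq gHg^{-1}$, i.e.\ $K\in \calF(H)$.

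Next, I would pick a model $Y$ for $E_{\calF(H/N)}(G/N)$ and show that $\res_{\pi}Y$, viewed as a $G$-CW-complex with $N$ acting trivially, is a model for $E_{\calF(H)}G$. The isotropy of $y\in \res_\pi Y$ in $G$ equals $\pi^{-1}((G/N)_y)$, which by the family identification of the previous step lies in $\calF(H)$ because $(G/N)_y\in\calF(H/N)$; and for $K\leq G$ the fixed set $(\res_\pi Y)^K=Y^{\pi(K)}$ is contractible exactly when $\pi(K)\in\calF(H/N)$, equivalently $K\in\calF(H)$. Under this identification the cellular chain functor satisfies $\CC{C}(\res_\pi Y)=\res_\pi \CC{C}(Y)$ as contravariant $\Or(G,\calF(H))$-modules.

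Finally, I would invoke the projection formula arising from the adjunction $\ind_\pi \dashv \res_\pi$ for covariant modules over the (restricted) orbit categories: for any contravariant $\Or(G/N,\calF(H/N))$-module $P$ and covariant $\Or(G,\calF(H))$-module $M$,
\[
(\res_\pi P)\otimes_{\Or(G,\calF(H))} M \;\cong\; P\otimes_{\Or(G/N,\calF(H/N))} \ind_\pi M.
\]
Applying this with $P=\CC{C}(Y)$ produces an isomorphism of chain complexes, and taking homology gives $H_n^G(\res_\pi Y;M)\cong H_n^{G/N}(Y;\ind_\pi M)$, which by the definition of Adamson homology is precisely the claimed isomorphism.

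I expect the main obstacle to lie in the second step, namely carefully checking that $\res_\pi Y$ is genuinely a $G$-CW-complex model for $E_{\calF(H)}G$, with both the isotropy and fixed-point contractibility conditions properly deduced from the family identification. The projection formula at the end is formal once variances are correctly untangled, and in fact parallels the proof of Lemma~\ref{inductivestructureforbredon}.
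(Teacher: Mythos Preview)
Your proposal is correct and follows essentially the same route as the paper: choose a model $Y$ for $E_{\calF(H/N)}(G/N)$, observe that $\res_\pi Y$ is a model for $E_{\calF(H)}G$ (the paper asserts this without spelling out the family identity $\pi^{*}\calF(H/N)=\calF(H)$ that you verify), note $\CC{C}(\res_\pi Y)=\res_\pi\CC{C}(Y)$, and then apply the projection formula $(\res_\pi P)\otimes M\cong P\otimes \ind_\pi M$. The only cosmetic difference is that the paper makes the projection formula explicit via the bimodule $\dbZ[\pi(-),-]$ and an associativity step (citing \cite{MP02}), rather than invoking the $\ind_\pi\dashv\res_\pi$ adjunction abstractly, and works over $\Or(G)$ rather than the restricted $\Or(G,\calF(H))$; since $\CC{C}(Y)$ is supported on $\calF(H)$ this makes no difference.
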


\begin{proof}
Consider $X$ a model for $E_{\calF(H/N)}G/N$, then $\res_\pi X$ is a model for $E_{\calF(H)}G$.
It is not difficult to see that $C_*(\res_\pi X)=\res_\pi C_*(X)$ as chain complexes in $\rmodu \Or(G)$. Now, using \cite[Lemma 3.1  and Proposition 3.2]{MP02}, we have the following isomorphisms
\begin{align*}
\res_\pi C_*(X) \otimes_{\Or(G)} M  &\cong ( C_*(X) \otimes_{\Or(G/N)} \dbZ [\pi(-),-]  )\otimes_{\Or(G)} M \\
  &\cong  C_*(X) \otimes_{\Or(G/N)} (\dbZ [\pi(-),-]  \otimes_{\Or(G)} M)\\
  & \cong C_*(X) \otimes_{\Or(G/N)}\ind_\pi M.
\end{align*}
Applying homology to both sides we get the conclusion for Adamson homology. 
\end{proof}

Using Example \ref{bredongmodules}, and a straightforward computation of the coefficients, we get the following corollary.

\begin{corollary}\label{adamsongmodules}
Let $N$ be a normal subgroup of $G$ contained in $H$. Let $M$ be a $G$--module. Then, for all $n\geq 0$, we have the following isomorphisms
\begin{align*}
\homo[n]([G:H];M)&\cong \homo[n]([G/N:H/N];M_N)
\end{align*}
where $M_N$ is the group of coinvariants  of $M$.
\end{corollary}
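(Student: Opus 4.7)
The plan is to deduce the corollary from Proposition~\ref{prop:3.IT} by identifying $\ind_\pi(\underline{M})$ with $\underline{M_N}$ as covariant $\Or(G/N)$-modules, where the underline denotes the coinvariants functor. Recall from Example~\ref{gmodulecoefficients} the convention that Adamson homology of $(G,H)$ with coefficients in a $G$-module $M$ means Adamson homology with coefficients in the $\Or(G)$-module $\underline{M}$, and similarly for the quotient pair. Thus Proposition~\ref{prop:3.IT} applied to $\underline{M}$ yields $H_n([G:H];M)=H_n([G:H];\underline{M})\cong H_n([G/N:H/N];\ind_\pi\underline{M})$, and the identification $\ind_\pi\underline{M}\cong\underline{M_N}$ finishes the proof.

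The cleanest way I see to perform this identification is at the chain level, bypassing the abstract formula for $\ind_\pi$. Pick a model $Y$ for $E_{\calF(H/N)}(G/N)$, so that $\res_\pi Y$ is a model for $E_{\calF(H)}G$ as in the proof of Proposition~\ref{prop:3.IT}. Example~\ref{bredon:coef:gmodule} applied to $\res_\pi Y$ gives $\CC{C}(\res_\pi Y)\otimes_{\Or(G)}\underline{M}\cong\CC{S}(\res_\pi Y)\otimes_{\dbZ G}M$; but $\CC{S}(\res_\pi Y)=\CC{S}(Y)$ as chain complexes of abelian groups, with the $G$-action factoring through $\pi$, so the base-change isomorphism $\dbZ(G/N)\otimes_{\dbZ G}M\cong M_N$ rewrites the right-hand side as $\CC{S}(Y)\otimes_{\dbZ(G/N)}M_N$. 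Applying Example~\ref{bredon:coef:gmodule} once more, this time to the $G/N$-space $Y$, rewrites this as $\CC{C}(Y)\otimes_{\Or(G/N)}\underline{M_N}$; taking homology yields the desired isomorphism.

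Alternatively, one can identify $\ind_\pi\underline{M}$ with $\underline{M_N}$ directly: evaluating both at an orbit $(G/N)/(K/N)$ with $N\leq K\leq G$, the Yoneda-type isomorphism~\eqref{Yoneda} combined with the tensor product description of $\ind_\pi$ produces $M_K=(M_N)_{K/N}=\underline{M_N}((G/N)/(K/N))$, and naturality on morphisms is routine. I expect no substantial obstacle; the only subtlety is keeping track of the two relevant $G$-actions on $\CC{S}(Y)$ (the one factoring through $\pi$ and the original $G/N$-action) so that the base-change along $\dbZ G\to\dbZ(G/N)$ is applied correctly. This bookkeeping is presumably what the authors refer to as the \textit{straightforward computation of the coefficients}.
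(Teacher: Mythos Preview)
Your proposal is correct and follows essentially the same approach as the paper: apply Proposition~\ref{prop:3.IT} and then carry out the ``straightforward computation of the coefficients'' to identify $\ind_\pi(\underline{M})$ with $\underline{M_N}$. The paper's one-line proof leaves this computation to the reader, and your two arguments (the chain-level one via Example~\ref{bredon:coef:gmodule} and base change, and the direct evaluation at orbits) both fill in that gap correctly; indeed, the paper's citation of Example~\ref{bredongmodules} appears to be a slip for Example~\ref{bredon:coef:gmodule}, which is exactly what you invoke.
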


\subsection{Takasu relative group homology}
Let $G$ be a discrete group and let $H$ be a subgroup of $G$. Regarding $EG$ as an $H$-CW--complex, we have a map $EH \to EG$ unique up to $H$--homotopy, which, finally, 
leads to a $G$-map $\iota_H^G\colon \ind_H^G EH \to EG$.  From now on, we will always assume that the map  $\iota_H^G$ is an inclusion, by replacing $EG$ with the mapping cylinder of $\iota_H^G$. This assumption is due to the fact that we want to consider the CW--pair $(EG, \ind_H^G EH )$, which does not always make sense. A second reason is that we will make use of some push-out constructions,  (see equation \eqref{eq:tak.po} and Theorem~\ref{LW:pushout}), hence our assumption will turn them into homotopy $G$-push-outs.

Consider an $\org$--module $M$. Define the \emph{Takasu relative group ho\-mo\-lo\-gy of $(G,H)$} to be
\begin{gather*}
H_*(G,H;M)\defeq H_*^G(EG, \ind_H^G EH ;M).
\end{gather*}

\begin{remark}
Note that:
\begin{itemize}
\item By Example~\ref{bredongmodules}, our definition reduces to the definition of Takasu relative group homology with coefficients in a trivial $G$--module given in \cite{AC17}.
\item If $H=I$ is the trivial subgroup, then we recover the classical homology of $G$ with coefficients in the $G$--module $M(G/I)$.
\item If $M$ is a $G$--module, then we recover the definition of Takasu homology with coefficients in a $G$--module given in \eqref{eq:Takasu.top.def} (see Example \ref{gmodulecoefficients} 
and Example \ref{localcoefficienthomology}). 
\end{itemize}
\end{remark}

Now we state the main property of Takasu relative group homology, this is, a long exact sequence that relates the homology of $G$ and $H$ to the homology of the pair $(G,H)$. This long exact 
sequence can be interpreted as the fact that Takasu's theory resembles the quotient of the homologies of $G$ and $H$. In fact, in Subsection \ref{products} we show an example where this phenomenon is more evident.

\begin{theorem}\label{takasulongexactsequence}
Let $G$ be a group and $H$ a subgroup. Let $M$ be a covariant $\org$--module. For $n\geq 0$, there exists a long exact sequence of the form, 
\begin{equation*}
\cdots \to \takasu{n+1}{G}{H}{M} \to H_n(H;\res_H^GM(H/I))\to H_n(G;M(G/I))\to \takasu{n}{G}{H}{M} \to \cdots.
\end{equation*} 
\end{theorem}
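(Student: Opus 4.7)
The plan is to obtain the stated long exact sequence as the standard Bredon-homology long exact sequence of the $G$-CW-pair $(EG, \ind_H^G EH)$, and then to identify the two absolute terms with the classical group homologies of $G$ and $H$. I would proceed in three steps.

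First, because the map $\iota_H^G\colon \ind_H^G EH \to EG$ is assumed to be an inclusion of $G$-CW-complexes (after replacing $EG$ by the mapping cylinder of $\iota_H^G$), the relative cellular chain construction gives a short exact sequence
\[
0 \to \CC{C}(\ind_H^G EH) \to \CC{C}(EG) \to \CC{C}(EG, \ind_H^G EH) \to 0
\]
of chain complexes of \emph{free} contravariant $\Or(G)$--modules. Tensoring over $\Or(G)$ with the covariant $\Or(G)$--module $M$ preserves exactness because each term is free, so the associated long exact sequence of Bredon homology has the form
\[
\cdots \to H_n^G(\ind_H^G EH; M) \to H_n^G(EG; M) \to H_n^G(EG, \ind_H^G EH; M) \to H_{n-1}^G(\ind_H^G EH; M) \to \cdots,
\]
whose middle term is $H_n(G,H;M)$ by definition.

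Second, I would identify $H_n^G(EG; M)$ with $H_n(G; M(G/I))$. Since $G$ acts freely on $EG$, every cell has trivial isotropy, so the Yoneda-type isomorphism~\eqref{Yoneda} reduces $\CC{C}(EG)\otimes_{\Or(G)} M$ cell by cell to $\CC{S}(EG)\otimes_{\dbZ[G]} M(G/I)$, exactly as in the computation of Example~\ref{bredon:coef:gmodule}. Since $\CC{S}(EG)$ is a free $\dbZ[G]$-resolution of $\dbZ$, this chain complex computes the classical group homology $H_*(G; M(G/I))$; alternatively, this is the content of Example~\ref{localcoefficienthomology} applied to $(EG, \emptyset)$.

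Third, for the subgroup term Lemma~\ref{inductivestructureforbredon} provides a natural isomorphism
\[
H_n^G(\ind_H^G EH; M) \cong H_n^H(EH; \res_H^G M),
\]
and the previous step, applied with $G$ replaced by $H$ and $M$ by $\res_H^G M$, identifies the right-hand side with $H_n(H; \res_H^G M(H/I))$. Substituting both identifications into the long exact sequence above yields exactly the sequence in the statement. The only point that requires care is the naturality of these identifications with respect to the connecting morphism, which follows from the naturality of Lemma~\ref{inductivestructureforbredon} in the $H$-space argument; I do not foresee any genuine obstacle here, and the remainder of the argument is bookkeeping.
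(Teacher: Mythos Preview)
Your proposal is correct and follows essentially the same approach as the paper: take the Bredon long exact sequence of the pair $(EG,\ind_H^G EH)$, then use Lemma~\ref{inductivestructureforbredon} to rewrite $H_*^G(\ind_H^G EH;M)$ as $H_*^H(EH;\res_H^G M)$ and Example~\ref{localcoefficienthomology} to identify the absolute terms with classical group homology. The paper's proof is just a terse diagram-chase recording exactly these identifications; your added remark that exactness survives tensoring because the chain modules are free is a harmless elaboration of what the paper takes for granted.
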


\begin{proof}
 It follows from the induction structure and the long exact sequence of the pair $(EG,\ind_H^G EH)$. In fact, we have the following commutative diagram where every vertical arrow 
 is an isomorphism using Lemma \ref{inductivestructureforbredon}, and Example \ref{localcoefficienthomology}
\[
\xymatrix{\cdots\ar[r] & H_{n}^{G}(\ind^G_H EH;M)\ar[r]\ar[d] & H_{n}^{G}(EG;M)\ar[r]\ar[d] & H_{n}^{G}(EG,\ind_{H}^{G}EH;M)\ar[r]\ar[d] & \cdots\\
\cdots\ar[r] & H_{n}^{H}(EH;\res_{H}^{G}M)\ar[r]\ar[d] & H_{n}^{G}(EG;M)\ar[r]\ar[d] & H_{n}(G,H;M)\ar[r]\ar[d] & \cdots\\
\cdots\ar[r] & H_{n}(H;\res_{H}^{G}M(H/I))\ar[r] & H_{n}(G;M(G/I))\ar[r] & H_{n}(G,H;M)\ar[r] & \cdots
}
\]
\end{proof}

\begin{remark}
 By Example \ref{bredongmodules}, this exact sequence reduces to the one in \cite{AC17} for $M$ the constant functor $\underline{\dbZ}$.
\end{remark}

Let us give an equivalent definition of Takasu relative homology, at least in degrees greater than or equal to $2$. This addresses Problem \ref{prob:cone}.

The \emph{Takasu space $T(G,H)$ of $(G,H)$} is the $G$-CW--complex  given by the following $G$--pushout
\begin{equation}\label{eq:tak.po}
\xymatrix{
\ind^G_H EH \ar[d] \ar[r]^{\iota_H^G} & EG \ar[d]  \\
G/H \ar[r]  & T(G,H),}
\end{equation}
where the left map is induced by collapsing each connected component of $\ind_H^G EH$ to a point. 

\begin{remark}\label{rem:tak.orb}
Note that the cone points form a $G$--orbit of $0$--cells of $T(G,H)$ that can be identified with $G/H$, hence we can make sense to the pair $(T(G,H),G/H)$. Note that the points in this orbit are the only ones with non-trivial isotropy.
\end{remark}

\begin{theorem}
Let $G$ be a group and $H$ a subgroup. Then, for all $n\geq0$, the quotient map $(EG,\ind_H^G EH)\to (T(G,H),G/H)$ induces an isomorphism
\[
H_n(G,H;M)\to H_n^G(T(G/H),G/H;M),
\]
for every $\Or(G)$--module $M$.
\end{theorem}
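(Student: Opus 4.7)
The plan is to show that the quotient map of $G$-CW-pairs induces an isomorphism of relative cellular $\Or(G)$-chain complexes; tensoring with $M$ and taking homology then gives the conclusion. Under our standing assumption, $\iota_H^G\colon \ind_H^G EH \hookrightarrow EG$ is an equivariant subcomplex inclusion, so the $G$-pushout \eqref{eq:tak.po} endows $T(G,H)$ with a genuine $G$-CW-structure. By Remark~\ref{rem:tak.orb}, its $G$-cells consist of exactly one additional orbit $G/H$ of $0$-cells (the cone points) together with the $G$-cells of $EG$ lying outside $\ind_H^G EH$. The quotient $G$-map $q\colon EG\to T(G,H)$ restricts to the component-collapse $\ind_H^G EH\to G/H$ on the subcomplex and induces a bijection, preserving isotropy groups, between the $G$-cells of $EG$ outside $\ind_H^G EH$ and the $G$-cells of $T(G,H)$ outside $G/H$.

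Next, I would compare the relative cellular $\Or(G)$-chain complexes $\CC{C}(EG,\ind_H^G EH)$ and $\CC{C}(T(G,H),G/H)$. In each degree, both are free contravariant $\Or(G)$-modules with the same generating set of $G$-cells (transported by the cellular bijection above) and with identical stabilizers; the cellular map $q_*$ is therefore a degreewise isomorphism. Moreover, the relative boundary operator in each complex records only the components of the attaching map that land on cells outside the subcomplex, and kills those landing inside the subcomplex. Since $q$ is a cellular homeomorphism off the subcomplex and factors through the collapse on the subcomplex, these boundary components are identified on the two sides. Hence $q_*$ is a chain isomorphism in $\rmodu\Or(G)$.

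Finally, applying $-\otimes_{\Or(G)}M$ preserves this isomorphism, and passing to homology yields $H_n(G,H;M)\xrightarrow{\cong} H_n^G(T(G,H),G/H;M)$ for every $n\ge 0$ and every $\Or(G)$-module $M$. The key point to verify---and perhaps the main obstacle---is the cellular structure of the pushout, namely that no new cells appear beyond the single orbit $G/H$ of cone points; this is precisely Remark~\ref{rem:tak.orb}, so the remainder of the proof is purely formal.
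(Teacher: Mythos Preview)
Your argument is correct, and it takes a genuinely different route from the paper's. The paper simply invokes the ``good pair'' argument of Hatcher (Proposition~2.22), transported to the equivariant setting: since Bredon homology satisfies excision and homotopy invariance, the quotient map of a $G$-CW-pair $(X,A)\to(X/A,A/A)$ induces an isomorphism on relative Bredon homology. You instead work directly at the cellular chain level, showing that $q_*\colon\CC{C}(EG,\ind_H^G EH)\to\CC{C}(T(G,H),G/H)$ is an isomorphism of $\Or(G)$-chain complexes. This is legitimate: both relative complexes are concentrated at $G/I$ (all cells outside the collapsed subcomplex have trivial isotropy), where they coincide with the ordinary relative cellular chain complexes, and the classical identification $C_*(X,A)\cong C_*(X/A,A/A)$ carries over $G$-equivariantly, exactly as you describe. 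Your approach is more concrete and entirely self-contained, bypassing the axiomatic machinery; the paper's approach, on the other hand, applies verbatim to any equivariant homology theory satisfying excision and homotopy invariance, not just Bredon homology defined via cellular chains.
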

\begin{proof}
The proof is essentially the same as     that of \cite[Proposition 2.22]{Ha02}, using the excision and homotopy invariance axioms of Bredon homology.
\end{proof}

The following corollary gives an answer to Problem \ref{prob:cone}.

\begin{corollary}\label{absolute:homology:takasu}
For all $n\geq2$ and every $\Or(G)$--module $M$ (in particular for every $G$--module), We have the following isomorphism
\[
H_n(G,H;M)\cong H_n^G(T(G,H);M)
\]
\end{corollary}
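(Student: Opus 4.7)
The plan is to combine the preceding theorem with the long exact sequence of the Bredon homology pair $(T(G,H), G/H)$, after observing that the Bredon homology of the $G$-orbit $G/H$ is concentrated in degree zero.

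First, I would invoke the theorem immediately preceding the corollary, which identifies $H_n(G,H;M)$ with the relative Bredon homology $H_n^G(T(G,H), G/H; M)$ for every $n \geq 0$. So it suffices to prove that the map induced by the inclusion $G/H \hookrightarrow T(G,H)$, namely
\[
H_n^G(T(G,H);M) \longrightarrow H_n^G(T(G,H), G/H; M),
\]
is an isomorphism for $n \geq 2$. This will be read off from the long exact sequence of the $G$-CW-pair $(T(G,H), G/H)$ in Bredon homology, whose existence follows from the standard construction of Bredon homology for $G$-CW-pairs used in Section~\ref{sec:G-ht}.

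The key computation is that $G/H$, regarded as a $0$-dimensional $G$-CW-complex with a single orbit of cells, has trivial Bredon homology in positive degrees. Indeed, by the description of the cellular chain complex given right after the definition of Bredon homology, the complex $\CC{C}(G/H)\otimes_{\org} M$ is concentrated in degree $0$ and equals $\dbZ[-,G/H]\otimes_{\org} M = M(G/H)$ via the Yoneda-type isomorphism \eqref{Yoneda}. Consequently,
\[
H_i^G(G/H; M) = \begin{cases} M(G/H) & i=0, \\ 0 & i\geq 1. \end{cases}
\]

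Plugging this into the long exact sequence, for every $n \geq 2$ the adjacent terms $H_n^G(G/H;M)$ and $H_{n-1}^G(G/H;M)$ both vanish, so the map induced by inclusion is an isomorphism, yielding the desired identification $H_n(G,H;M)\cong H_n^G(T(G,H);M)$. There is no serious obstacle here; the only point requiring care is verifying that the Bredon chain complex is well-behaved on the orbit $G/H$ sitting inside $T(G,H)$ as described in Remark~\ref{rem:tak.orb}, and that the long exact sequence of a pair is available in this generality, both of which are part of the standard framework from \cite{MV03}.
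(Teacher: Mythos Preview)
Your proposal is correct and follows essentially the same approach as the paper's proof, which is a terse one-liner invoking the long exact sequence of the pair $(T(G,H),G/H)$ together with the vanishing of $H_n^G(G/H;M)$ for $n\geq 1$. You have simply spelled out the details of that vanishing via the cellular Bredon chain complex and the Yoneda isomorphism, which is exactly the intended justification.
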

\begin{proof}
It follows from the long exact sequence of the pair $(T(G,H),G/H)$ and the fact that $H_n^G(G/H;M)$ is zero for all $n\geq1$ and for every $M$.
\end{proof}

\subsection{The comparison homomorphism}
Since $T(G,H)$ is defined via the $G$-pushout \eqref{eq:tak.po}
and both $\ind^G_H EH$ and $EG$ are free $G$-CW--complexes, while for the points of $G/H$ all the isotropy groups are conjugated to $H$, we conclude that all the isotropy groups belong to the family $\calF (H)$. 
Therefore there is a $G$-map, unique up to $G$--homotopy 
\begin{equation}\label{comparison-spaces}
T(G,H) \to E_{\calF (H)} G,
\end{equation}
which leads to the $G$--map of pairs
\[
(T(G,H),G/H)\to (E_{\calF(H)}G,G/H).
\]
Applying the long exact sequence to this map of pairs, and using the fact that $H^G_i(G/H;M)=0$ for all $i\geq1 $, we get (see Corollary \ref{absolute:homology:takasu}), for $n\geq2$ the following commutative square
\begin{equation}\label{comparison:square}
\xymatrix{H_{n}(T(G,H);M)\ar[r]^{\cong}\ar[d] & H_{n}(G,H;M)\ar[d]\\
H_{n}([G:H];M)\ar[r]^{\cong} & H_{n}(E_{\calF(H)}G,G/H;M).
}
\end{equation}
Hence we have a homomorphism
\begin{equation*}
\varphi_n \colon \takasu{n}{G}{H}{M} \to \adamson{n}{G}{H}{M}, 
\end{equation*}
for all $n\geq2$ and for every covariant $\org$--module $M$, and analogously for cohomology. We call $\varphi$ the \emph{comparison homomorphism}.

\begin{remark}
Using the construction above, we still have the following commutative diagram with exact rows, that might help us to compare Takasu and Adamson relative homology theory in dimensions $0$ and $1$:
\[\scriptsize
\xymatrix{0\ar[r] & H_{1}(T(G,H);M)\ar[r]\ar[d] & H_{1}(G,H;M)\ar[r]\ar[d] & H_{0}(G/H;M)\ar[r]\ar[d]^{=} & H_{0}(T(G,H);M)\ar[r]\ar[d] & H_{0}(G,H;M)\ar[r]\ar[d] & 0\\
0\ar[r] & H_{1}([G:H];M)\ar[r] & H_{1}(E_{\calF(H)}G,G/H;M)\ar[r] & H_{0}(G/H;M)\ar[r] & H_{0}([G:H];M)\ar[r] & H_{0}(E_{\calF(H)}G,G/H;M)\ar[r] & 0
}
\]
Note that, in case $M$ is a constant $\Or(G)$--module, then the maps $H^G_0(G/H;M) \to H^G_0(T(G,H);M)$ and $H^G_0(G/H;M) \to H^G_0(E_{\calF(H)}G;M)$ are split injective since they can be identified with 
singular homology maps (see Example~\ref{bredongmodules}), therefore, at least in this case, we also have defined the comparison homomorphism in dimensions $0$ and $1$.
\end{remark}

Recall that a subgroup $H$ of $G$ is said to be \emph{malnormal} if $g^{-1} H g\cap H= I$ for all $g\in G\setminus H$. The following theorem addresses the first question in  Problem~\ref{problem:comparison homomorphism}.

\begin{theorem}\label{comparisontheorem}
The following are equivalent
\begin{enumerate}
\item $H$ is a malnormal subgroup of $G$;
\item the Takasu space $T(G,H)$ of $(G,H)$ is a model for the classifying space $E_{\calF(H)}G$.
\item The comparison homomorphism \[\varphi \colon \takasu{*}{G}{H}{M} \to \adamson{*}{G}{H}{M} \]
is an isomorphism for every covariant $\org$--module $M$;
\end{enumerate}
\end{theorem}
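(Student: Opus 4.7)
The plan is to establish $(1) \Rightarrow (2) \Rightarrow (3) \Rightarrow (1)$.

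For $(1) \Rightarrow (2)$, I verify that $T(G,H)$ satisfies the two defining properties of $E_{\calF(H)}G$. The isotropy condition is recorded in Remark~\ref{rem:tak.orb}. For the contractibility of $K$-fixed point sets with $K \in \calF(H)$, I apply the fixed-point functor $(-)^K$ to the $G$-pushout \eqref{eq:tak.po}. Since $EG$ and $\ind_H^G EH$ are free $G$-spaces, their $K$-fixed sets are empty whenever $K \neq 1$, so the pushout reduces to $T(G,H)^K = (G/H)^K = \{gH \mid K \leq gHg^{-1}\}$. Malnormality of $H$ forces this set to be a singleton whenever $K \neq 1$: if $K$ is contained in both $g_1 H g_1^{-1}$ and $g_2 H g_2^{-1}$, then $g_2^{-1}Kg_2$ is a nontrivial subgroup of $H \cap (g_1^{-1}g_2)^{-1}H(g_1^{-1}g_2)$, forcing $g_1^{-1}g_2 \in H$ and thus $g_1 H = g_2 H$. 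For $K = 1$, $T(G,H)$ is the quotient of the contractible $EG$ obtained by collapsing each connected component of the subcomplex $\iota_H^G(\ind_H^G EH) \cong \bigsqcup_{G/H} EH$, a disjoint union of contractible CW-subcomplexes; this collapse is a homotopy equivalence, so $T(G,H)$ is contractible.

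For $(2) \Rightarrow (3)$, the uniqueness of classifying spaces up to $G$-homotopy equivalence \cite[Theorem~1.9]{Lu05} forces the canonical $G$-map \eqref{comparison-spaces} to be a $G$-homotopy equivalence. Bredon homology is a $G$-homotopy functor, so this map induces an isomorphism on the relative Bredon homology of the pair $(\,\cdot\,,G/H)$ for every $\Or(G)$-module $M$, and via the commutative diagram \eqref{comparison:square} the comparison homomorphism $\varphi$ is therefore an isomorphism for every $M$.

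For $(3) \Rightarrow (1)$, I argue by contraposition. Suppose $H$ is not malnormal and pick $g \notin H$ with $K := H \cap g^{-1}Hg \neq 1$; then $K \in \calF(H)$, and the computation above gives $T(G,H)^K = (G/H)^K \supseteq \{H, g^{-1}H\}$ with distinct entries, while $E_{\calF(H)}G^K$ is contractible. Testing the relative Bredon chain complex of the pair $(E_{\calF(H)}G, T(G,H))$ against the Yoneda coefficient $M = \dbZ[G/K, -]$ and invoking \eqref{Yoneda} computes $H^G_n(E_{\calF(H)}G, T(G,H); M) = H_n(E_{\calF(H)}G^K, T(G,H)^K)$, which equals $\dbZ^{|(G/H)^K|-1} \neq 0$ in degree $1$ and vanishes in other degrees. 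The long exact sequence of Bredon homology of the pair then propagates this nonvanishing into a failure of some $\varphi_n$, contradicting $(3)$. The main obstacle is that $\varphi_n$ is formally defined only in degrees $n \geq 2$ for general $M$, whereas the discrepancy just exhibited lives in degree $1$; I bridge this either by appealing to the extension of $\varphi$ to degrees $0$ and $1$ recorded in the remark following \eqref{comparison:square}, or by replacing $M = \dbZ[G/K,-]$ with the kernel $M'$ of a surjection $F \twoheadrightarrow \dbZ[G/K,-]$ from a free $\Or(G)$-module $F$, which via the induced long exact sequence in Bredon homology lifts the degree-one discrepancy into a degree-$n$ discrepancy with $n \geq 2$.
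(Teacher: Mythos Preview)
Your argument for $(1)\Rightarrow(2)\Rightarrow(3)$ is correct and matches the paper (which cites \cite[Proposition~7.11]{AC17} for $(1)\Leftrightarrow(2)$ rather than reproving it, but your direct verification is fine). For $(3)\Rightarrow(1)$ your strategy is the homological mirror of the paper's: the paper switches to Bredon \emph{cohomology} and invokes a coefficient module $N$ from \cite{MS95} satisfying $H^*_G(X;N)\cong H^*(X^K;\dbZ)$, whereas you stay in homology and use the Yoneda module $\dbZ[G/K,-]$, for which \eqref{Yoneda} gives $H^G_*(X;\dbZ[G/K,-])\cong H_*(X^K)$ directly. That substitution is a genuine simplification.

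There is, however, a real problem with your dimension-shifting fix~(b). First, $\dbZ[G/K,-]$ is itself a free covariant $\org$--module, so any short exact sequence $0\to M'\to F\to \dbZ[G/K,-]\to 0$ splits and no shift occurs. Second, even ignoring that, the connecting homomorphism in the coefficient long exact sequence for Bredon \emph{homology} goes $H^G_n(-;M)\to H^G_{n-1}(-;M')$, i.e.\ it lowers degree; you cannot push a degree-$1$ obstruction up to degree $\geq 2$ this way. Concretely, with $M=\dbZ[G/K,-]$ one has $H^G_n(E_{\calF(H)}G,T(G,H);M)=H_n(E^K,T^K)$, which vanishes for $n\neq 1$, so the long exact sequence of the triple $(E_{\calF(H)}G,T(G,H),G/H)$ shows $\varphi_n$ \emph{is} an isomorphism for all $n\geq 2$ with this coefficient module; the obstruction lives entirely in degrees $0$ and $1$.

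Your fix~(a) is the honest route, and it is exactly what the paper does implicitly: the diagram in the remark following \eqref{comparison:square} exists for every $M$, and the paper's own proof uses the degree-$0$ map $H^0_G(E_{\calF(H)}G;N)\to H^0_G(T(G,H);N)$ (labelled $\varphi^*$) without comment. So the low-degree subtlety you flagged is present in the paper as well; both arguments require reading condition~(3) as asserting that the map $T(G,H)\to E_{\calF(H)}G$ induces an isomorphism on Bredon (co)homology in \emph{all} degrees.
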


\begin{proof}
By \cite[Proposition 7.11]{AC17}, the Takasu space $T(G,H)$ is a model for $E_{\calF(H)}G$ if and only if $H$ is a malnormal subgroup of $G$. Hence (1) is equivalent to (2).

Assume that $T(G,H)$ is a model for $E_{\calF(H)}G$.  Since $E_{\calF(H)}G$ is unique up to $G$ homotopy, the (unique up to $G$ homotopy) $G$--map $T(G,H)\to E_{\calF(H)}G$ in \eqref{comparison-spaces} 
induces an isomorphism in homology for every $\org$--module $M$, therefore the left map in (\ref{comparison:square}) is an isomorphism and we conclude that the comparison homomorphism is an isomorphism.

Now suppose that $\varphi$ is an 
isomorphism for all $\org$--modules $M$, we shall verify that $T(G,H)$ is a model for $E_{\calF(H)}G$. 

From the definition, we can conclude that $T(G,H)$ is contractible since we are collapsing contractible subcomplexes of the contractible space  $EG$ to obtain $T(G,H)$. Also, we know that, for a non-trivial subgroup $K \in \calF(H)$, 
the space $T(G,H)^K$ coincides with $(G/H)^K$, in particular, it is a discrete subspace of $T(G,H)$. Hence $T(G,H)^K$ is contractible if and only if it consist of exactly one point. 
Now we shall prove that $T(G,H)^K$ is a discrete space with the (singular) cohomology of the one-point space. From \cite[Pag. 252, eq. (5)]{MS95}, there exists an $\org$--module $N$, 
such that, for any $G$--space $X$, we have the following natural isomorphism
\[ H_G^*(X; N)\cong H^*(X^K;\dbZ) \]
where the right hand side is singular cohomology. Hence $\varphi$ induces the following commutative diagram
\[
\xymatrix{
H_G^*(T(G,H);N)\ar[d]^{\varphi^*} \ar[r]^{\cong}  & H^*(T(G,H)^K;\dbZ)\ar[d]^{\varphi^*}\\
H_G^*(E_{\calF(H)}G;N) \ar[r]^{\cong } & H^*((E_{\calF(H)}G)^K;\dbZ).}
\]
Hence, we conclude that $T(G,H)^K$ is a one-point space. This finishes the proof.
\end{proof}

\begin{remark}
In \cite[Theorem 7.13]{AC17},  the \textit{if} part of the Theorem~\ref{comparisontheorem} is proved using a constant $\org$--module as coefficients, while for the \textit{only if} part we strongly use the 
fact that the comparison homomorphism is an isomorphisms for any coefficients.
\end{remark}

\section{Adamson and Takasu theories via derived functors}\label{sec:derived-functors}

Let $G$ be a group and let $\calF$ be a family of subgroups of $G$. In this section we recall the definition of the derived functor $\Tor[]{}$ in the category of $\Or(G,\calF)$--modules. Then, taking $H\leq G$, we use them to give algebraic definitions of Adamson and Takasu relative homology theories for the pair $(G,H)$. We then give a description of the comparison homomorphism using derived functors.

\vskip 5pt

Recall  there is an inclusion $\rmodu \Or(G,\calF)\longrightarrow \rmodu \Or(G)$, which sends a contravariant $\Or(G,\calF)$--module $M$ to a contravariant $\Or(G)$--module, which we denote by $\widehat{M}$, by setting $\widehat{M}(G/K)=M(G/K)$ for $K\in \calF$ and $\widehat{M}(G/K)=0$ for $K\notin \calF$.

\subsection{$\Tor[]{}$  functors}

A sequence of $\Or(G,\calF)$-modules is exact if, for all $K\in \calF$, when evaluated at $G/K$, the resulting sequence of abelian groups is exact. 

Let $N$ be a contravariant $\orgf$--module and let $M$ be a covariant $\orgf$--module. Consider a projective resolution $\CC[N]{P}$ of $N$. As in the classical setting we have a \emph{$\Tor[]{}$ functor} given by 
\begin{align*} 
\Tor[\Or(G,\calF)]{i}(N,M)&=H_i(\CC[N]{P}\otimes_{\Or(G,\calF)}M).
\end{align*}

\subsection{Adamson relative group homology}
Now we are interested in defining Adamson ho\-mo\-lo\-gy of a pair $(G,H)$ using the $\Tor[]{}$ and $\Ext[]{}$ functors. For this, we take $\calF=\calF(H)$. 

One can obtain $\Or(G,\calF(H))$--projective resolutions from a model of $E_{\calF(H)}G$ (see \cite[p.~11]{MV03}). We include this construction for completeness. First, define the \emph{augmentation homomorphism} $\varepsilon:\CC[0]{C}(X)(-)\to \underline{\dbZ}$ as the usual augmentation homomorphism $\varepsilon:\CC[0]{C}(X^K)\to \dbZ$ for all $K\in \calF(H)$.

\begin{proposition}\label{bredonresolution}
Let $X$ be a model for $E_{\calF(H)}G$. Then the augmented chain complex $\CC{C}(X)$ is a free, and therefore projective, $\Or(G,\calF(H))$--resolution of the constant $\Or(G,\calF(H))$-module $\underline{\dbZ}$.
\end{proposition}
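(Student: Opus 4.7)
The plan is to verify the two defining properties of a free resolution separately: that each $\CC[n]{C}(X)$ is a free contravariant $\Or(G,\calF(H))$--module, and that the augmented complex $\CC{C}(X)\to\underline{\dbZ}\to 0$ is acyclic.

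For the freeness, I would decompose the $G$--set $\Delta_n$ of $n$--cells of $X$ into its $G$--orbits and choose a set $\Sigma_n$ of orbit representatives. Since $X$ is a model for $E_{\calF(H)}G$, every isotropy group $K_\sigma$ of a cell lies in $\calF(H)$, so each orbit has the form $G/K_\sigma$ with $K_\sigma\in\calF(H)$. For $L\in\calF(H)$, the $n$--cells of $X^L$ coming from the orbit of $\sigma$ are in natural bijection with $(G/K_\sigma)^L\cong\gmap(G/L,G/K_\sigma)$. Assembling these identifications as $L$ varies yields a natural isomorphism
\[
\CC[n]{C}(X)\ \cong\ \bigoplus_{\sigma\in\Sigma_n}\dbZ[-,G/K_\sigma],
\]
exhibiting $\CC[n]{C}(X)$ as a direct sum of representable functors, hence a free $\Or(G,\calF(H))$--module in the sense of Example~\ref{z-trasformation}.

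For the acyclicity, I would use the pointwise nature of exactness in $\rmodu\Or(G,\calF(H))$: the augmented complex is exact in this category if and only if its evaluation at every $G/K$ with $K\in\calF(H)$ is exact. Evaluating at $G/K$ yields the ordinary augmented cellular chain complex $\CC{C}(X^K)\to\dbZ\to 0$ with the usual augmentation $\varepsilon$. By the defining property of $E_{\calF(H)}G$, the fixed--point space $X^K$ is contractible for every $K\in\calF(H)$, so its augmented cellular chain complex is acyclic.

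The main obstacle—essentially Yoneda-type bookkeeping—is verifying that the orbit--wise identifications $(G/K_\sigma)^L\cong\gmap(G/L,G/K_\sigma)$ are natural in $L$ so as to glue into a genuine isomorphism of contravariant $\Or(G,\calF(H))$--modules, and that the direct--sum decomposition is compatible with the cellular boundary homomorphisms, so the isomorphism above is one of chain complexes rather than only of graded modules. Once this is set up, the acyclicity statement is immediate from the contractibility of the fixed--point sets that characterises $E_{\calF(H)}G$, and the proposition follows.
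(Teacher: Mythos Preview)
Your proof is correct and follows exactly the approach of the paper's (much terser) argument: freeness of each $\CC[n]{C}(X)$ via the orbit decomposition of the cells into summands $\dbZ[-,G/K_\sigma]$ with $K_\sigma\in\calF(H)$, and exactness by evaluating at each $G/K$ and using contractibility of $X^K$. Your final ``obstacle'' about compatibility with the boundary maps is not actually needed here: freeness is a module-level statement about each $\CC[n]{C}(X)$ individually, and exactness is checked pointwise on the complex as given, so you never need the direct-sum identification to intertwine the differentials.
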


\begin{proof}
Since $X^K$ is contractible, the augmented chain complex $\CC{C}(X)(G/K)=\CC{C}(X^K)$ is acyclic. Therefore the augmented chain complex functor $\CC{C}(X)$ is exact.

On the other hand, since $X$ has isotropy groups in $\calF(H)$, the $\orgf$--modules $\CC[i]{C}(X)$ are all free modules. Therefore the augmented chain complex $\CC{C}(X)$ is a free resolution of $\underline{\dbZ}$. 
\end{proof}

Let $\CC{P}$ be an $\Or(G,\calF(H))$--projective resolution of the constant $\Or(G,\calF(H))$--module $\underline{\dbZ}$. Hence the induced sequence $\widehat{\CC{P}}$ of $\Or(G)$--modules, is an $\Or(G)$--projective resolution of the $\Or(G)$--module $\widehat{\underline{\dbZ}}$.

\begin{theorem}\label{adamson-derivado}
Let $G$ be a group and $H$ a subgroup. For all $i\geq0$ and for all $\orgf$--module $M$, we have the following isomorphisms
\begin{gather*} 
H_i([G:H];M)\cong \Tor[\Or(G)]{i}(\widehat{\underline{\mathbb{Z}}},M)
\end{gather*}
\end{theorem}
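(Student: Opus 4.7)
My plan is to reduce the statement to a straightforward definition chase, using Proposition~\ref{bredonresolution} together with the observation (recorded just before the theorem) that extension by zero converts an $\Or(G,\calF(H))$--projective resolution into an $\Or(G)$--projective resolution.

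First I would fix a model $X$ for $E_{\calF(H)}G$. By Proposition~\ref{bredonresolution}, the augmented cellular chain complex $\CC{C}(X)$ is a free $\Or(G,\calF(H))$--resolution of $\underline{\dbZ}$. The key observation is that the $\Or(G)$--module underlying $\CC{C}(X)$ coincides with its extension by zero $\widehat{\CC{C}(X)}$: indeed, since every isotropy group of $X$ lies in $\calF(H)$, the fixed-point set $X^K$ is empty whenever $K\notin\calF(H)$, hence $\CC[i]{C}(X)(G/K)=0$ for such $K$. By the discussion immediately preceding the theorem, $\widehat{\CC{C}(X)}$ is therefore an $\Or(G)$--projective resolution of $\widehat{\underline{\dbZ}}$.

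Then I would simply unwind definitions. Combining the definition of Adamson homology with $\Or(G)$--module coefficients from Section~\ref{ssection:Adamsonhomology} and the definition of Bredon homology,
\[
\homo[i]([G:H];M)=\homo[i]\bigl(\CC{C}(X)\otimes_{\Or(G)}M\bigr)=\homo[i]\bigl(\widehat{\CC{C}(X)}\otimes_{\Or(G)}M\bigr),
\]
and since $\widehat{\CC{C}(X)}$ is an $\Or(G)$--projective resolution of $\widehat{\underline{\dbZ}}$, the right-hand side is by definition $\Tor[\Or(G)]{i}(\widehat{\underline{\dbZ}},M)$, which is the claim.

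I do not foresee any serious obstacle. The only point that needs genuine verification is the identification $\CC{C}(X)=\widehat{\CC{C}(X)}$ as $\Or(G)$--modules, which amounts to the emptiness of $X^K$ for $K\notin\calF(H)$. Everything else rests on the already quoted fact that the hat construction takes $\Or(G,\calF(H))$--projectives to $\Or(G)$--projectives, and on the universal property of the derived functor $\Tor[\Or(G)]{i}$, so the argument is essentially formal once Proposition~\ref{bredonresolution} is in hand.
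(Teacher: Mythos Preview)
Your proposal is correct and follows exactly the approach the paper intends: the paper's proof is the single line ``It follows from Proposition~\ref{bredonresolution},'' and you have simply unpacked that line by spelling out the identification $\CC{C}(X)=\widehat{\CC{C}(X)}$ (via emptiness of $X^K$ for $K\notin\calF(H)$) and invoking the remark just before the theorem that the hat of a projective resolution is again projective.
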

\begin{proof}
It follows from Proposition~\ref{bredonresolution}.
\end{proof}

\begin{remark}
In \cite{Hochschild:RHA}, Adamson relative group homology is described as derived functors, using the language of relative homological algebra. On the other hand, in the above description, 
Adamson homology is described using derived functors within an abelian category with enough projectives. This gives an algebraic approach completely analogue to the classical group homology.
\end{remark}

\subsection{Takasu relative group homology}\label{takasuviaderived}
Now we want to define Takasu homology of a pair $(G,H)$ using derived functors.
In Takasu's original paper \cite{takasu-RHARCTOG}, there is already a very nice approach for $H_*(G,H;M)$ via derived functors (see also \cite[Section 5.3]{AC17}).
Recall that the category of contravariant $\Or(G,\triv)$--modules is canonically isomorphic to both, the category of 
right $G$--modules and the category of left $G$--modules, hence there is no substantial difference between the three of them (see Remark~\ref{naturality-modules}). Consider the $\Or(G,\triv)$--module $I_{(G,H)}(\dbZ)\defeq \ker(\dbZ[G/H]\to \dbZ)$ and the corresponding $\Or(G)$--module $\widehat{I_{(G,H)}(\dbZ)}$. Hence we have that $\widehat{I_{(G,H)}(\dbZ)}(G/I)=I_{(G,H)}(\dbZ)$ and $\widehat{I_{(G,H)}(\dbZ)}(G/K)=0$ for all $K\neq I$. Hence, a $G$--projective resolution of the $G$--module $I_{(G,H)}(\dbZ)$ corresponds to a $\Or(G)$--projective resolution of the $\Or(G)$--module $\widehat{I_{(G,H)}(\dbZ)}$.

\begin{theorem}
For every $\Or(G)$--module $M$ and every $i\geq 1$, We have the following isomorphisms
\begin{gather*} 
H_i(G,H;M)\cong \Tor[G]{i-1}(I_{(G.H)}(\dbZ),M(G/I))\cong \Tor[\Or(G)]{i-1}(\widehat{I_{(G,H)}(\dbZ)},M).
\end{gather*}
\end{theorem}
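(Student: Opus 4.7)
The plan is to exploit the fact that the pair $(EG,\ind_H^G EH)$ consists of free $G$-CW-complexes, so that the Bredon chain complex computing $H_*(G,H;M)$ reduces to a chain complex of free $\dbZ[G]$-modules. For any free $G$-CW-complex $X$ and any covariant $\Or(G)$-module $M$, the computation in Example~\ref{bredon:coef:gmodule} generalizes to
\[
\CC{C}(X)\otimes_{\Or(G)} M \;\cong\; \CC{S}(X)\otimes_{\dbZ[G]} M(G/I),
\]
where $\CC{S}(X)$ denotes the classical cellular $\dbZ[G]$-chain complex; all isotropies are trivial, so every summand $\dbZ[-,G/K_\sigma]\otimes_{\Or(G)} M$ collapses to $M(G/I)$ by the Yoneda-type isomorphism. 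Applied to the relative pair, this identifies $H_i(G,H;M)$ with the homology of $\CC{S}(EG,\ind_H^G EH)\otimes_{\dbZ[G]} M(G/I)$.

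Next set $C=\CC{S}(EG)$ and $D=\CC{S}(\ind_H^G EH)$. These are free $\dbZ[G]$-resolutions of $\dbZ$ and $\dbZ[G/H]$ respectively, and the inclusion $D\hookrightarrow C$ covers the augmentation $\dbZ[G/H]\to\dbZ$. The long exact sequence associated to $0\to D\to C\to C/D\to 0$ then yields $\homo[n](C/D)=0$ for $n\ne 1$ and $\homo[1](C/D)=I_{(G,H)}(\dbZ)$. Hence $C/D$ is a complex of free $\dbZ[G]$-modules quasi-isomorphic to $I_{(G,H)}(\dbZ)$ placed in homological degree $1$; equivalently, $C/D$ is a free $\dbZ[G]$-resolution of $I_{(G,H)}(\dbZ)$ shifted up by one. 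Tensoring with $M(G/I)$ and tracking the shift yields the first isomorphism
\[
\homo[i](G,H;M)\;\cong\;\Tor[G]{i-1}\bigl(I_{(G,H)}(\dbZ),\,M(G/I)\bigr)
\]
for every $i\geq 1$.

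For the second isomorphism, I would first observe that $\widehat{(-)}$ sends free $\dbZ[G]$-modules to free $\Or(G)$-modules, since $\widehat{\dbZ[G]}$ is concentrated at the orbit $G/I$ with value $\dbZ[G]$, which matches the free $\Or(G)$-module $\dbZ[-,G/I]$. Because $\widehat{(-)}$ is also exact (exactness of $\Or(G)$-modules is detected objectwise), it sends the free $\dbZ[G]$-resolution of $I_{(G,H)}(\dbZ)$ from the previous paragraph to a free $\Or(G)$-resolution of $\widehat{I_{(G,H)}(\dbZ)}$. A direct inspection of the coend defining $\otimes_{\Or(G)}$ then shows that for any $G$-module $N$ and any $\Or(G)$-module $M$,
\[
\widehat{N}\otimes_{\Or(G)} M \;=\; N\otimes_{\dbZ[G]} M(G/I),
\]
because $\gmap(G/K,G/I)$ is empty whenever $K\ne I$, while the morphisms $G/I\to G/I$ impose exactly the $G$-action relations. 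Applying this termwise identifies the two $\Tor$ computations and delivers the second isomorphism.

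The step I expect to write out most carefully, and which I view as the main technical point, is the collapse $\widehat{N}\otimes_{\Or(G)} M = N\otimes_{\dbZ[G]} M(G/I)$ together with the verification that $\widehat{(-)}$ preserves free objects and exactness. Once those identifications are secured, both isomorphisms follow from standard homological algebra and careful bookkeeping of the single degree shift coming from the relative chain complex.
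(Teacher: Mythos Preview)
Your proof is correct and follows essentially the same strategy as the paper. The paper works with the bar-resolution models, invokes Takasu's result that $\CC{C}(G)/\ind_H^G\CC{C}(H)$ (shifted) is a $G$-projective resolution of $I_{(G,H)}(\dbZ)$, and identifies this complex with the cellular chains of the Takasu space $T(G,H)$; you instead work directly with the relative cellular complex of the pair $(EG,\ind_H^G EH)$, verify the resolution property via the long exact sequence, and spell out the identifications $\widehat{\dbZ[G]}\cong\dbZ[-,G/I]$ and $\widehat{N}\otimes_{\Or(G)}M\cong N\otimes_{\dbZ[G]}M(G/I)$ that the paper leaves implicit---more self-contained, but conceptually the same route.
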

\begin{proof}
Consider the standard resolution $\CC{C}(G)$ (resp. $\CC{C}(H)$) of the trivial $G$--module (resp. $H$--module) $\dbZ$. By \cite[Proposition~3.3]{takasu-RHARCTOG} the exact sequence
\begin{equation}\label{eq:tak.res}
\xymatrix{ \cdots \ar[r]& \CC[2]{C}(G)/\ind_{H}^{G}\CC[2]{C}(H)\ar[r]& \CC[1]{C}(G)/\ind_{H}^{G}\CC[1]{C}(H)\ar[r]& I_{(G,H)}(\dbZ)\ar[r]&0.}
\end{equation}
is a $G$--projective resolution of $I_{(G,H)}(\dbZ)$, so it can be seen as an $\Or(G)$--projective resolution of $\widehat{I_{(G,H)}(\dbZ)}$. On the other hand, by Remark~\ref{rem:tak.orb}, the sequence \eqref{eq:tak.res} corresponds to the cellular chain complex of the Takasu space $T(G,H)$. Thus for any $\Or(G)$--modulo $M$, \eqref{eq:tak.res} can be used to compute the 
Takasu relative homology of $(G,H)$.
\end{proof}

\subsection{The algebraic version of the comparison homomorphism}\label{ssec:alg.com.hom}

As in Subsection~\ref{takasuviaderived}, consider an $\Or(G)$--projective resolution $\CC{P}\to \widehat{I_{(G,H)}(\dbZ)}$.

On the other hand, since the family $\calF(H)$ is generated by $H$, we have that the augmentation map $\epsilon \colon \dbZ[-,G/H] \to \underline{\dbZ}$ is surjective. Next complete this surjection to a projective $\Or(G,\calF(H))$--resolution $\CC{Q}\to \underline{\dbZ}$, so that $\CC[0]{Q}=\dbZ[-,G/H]$. 
Notice that $I_{(G,H)}(\dbZ)=\ker{\varepsilon}(G/I)$. Consider an $\Or(G)$--projective resolution $\CC{\widehat{Q}}\to \underline{\hat \dbZ}$. Since $I_{(G,H)}(\dbZ)=\ker{\varepsilon}(G/I)$, the inclusion $\widehat{I_{(G,H)}(\dbZ)}\to \widehat{\ker{\varepsilon}}$, can be extended to a morphism of resolutions 
\begin{equation}\label{alg-morphism}
\widetilde{\varphi_*}:\CC{P} \to \CC{\widehat{Q}}.
\end{equation}
Given an  $\Or(G)$--module $M$, this induces homomorphisms
\begin{equation}\label{eq:comp.hom}
\varphi_i \colon \takasu{i}{G}{H}{M} \to \adamson{i}{G}{H}{M},\quad\text{for all $i\geq 2$.}
\end{equation}

Finally, we construct a long exact sequence on which the comparison homomorphism (for each dimension) fits. This addresses the second question in Problem~\ref{problem:comparison homomorphism}.

Let $J(G,H)$ be the quotient $\ker{\varepsilon}/I_{(G,H)}(\dbZ)$, so that we have the following long exact sequence
\[0\to I_{(G,H)}(\dbZ) \to \ker{\varepsilon} \to J(G,H) \to 0\]
Note that $\Tor[\Or(G)]{i-1}(\ker \varepsilon,M)=\adamson{i}{G}{H}{M}$. Given an $\org$--module $M$, we get  a long exact sequence of $\Tor[\Or(G)]{*}(-,M)$ groups

\begin{multline*}
\cdots \to \takasu{i}{G}{H}{M} \xrightarrow{\varphi_i} \adamson{i}{G}{H}{M} \to \Tor[\Or(G)]{i-1}(\widehat{J_{(G,H)}},M)\to  \takasu{i-1}{G}{H}{M}\to \cdots \\
\to \takasu{2}{G}{H}{M} \xrightarrow{\varphi_2} \adamson{2}{G}{H}{M}.
\end{multline*}

Essentially $\Tor[\Or(G)]{*}(\widehat{J_{(G,H)}},M)$ is measuring how far is the comparision homomorphism from being an isomorphism. 

We describe $J_{(G,H)}=\ker{\varepsilon}/I_{(G,H)}(\dbZ)$ in more detail. Recall that $\varepsilon\colon \dbZ[-,G/H] \to \underline{\dbZ}$ is the augmentation homomorphism. Then we have
\[
J_{(G,H)}(G/K)=\begin{cases}
\ker(\dbZ[(G/H)^K]\to \dbZ) & \text{ if }K\neq I \text{ and }K\in \calF(H),\\
0 & \text{otherwise}. \end{cases}
\]

The fixed point set $(G/H)^K$ is the set of all cosets $g_iH$ such that $K\leq g_i^{-1}Hg_i$. In particular, if $K\in \calF(H)$ is contained in only one conjugate $g_i^{-1}Hg_i$, then $\ker(\dbZ[(G/H)^K]\to \dbZ)=\ker(\dbZ\to \dbZ)=0$. Hence, we can conclude that $J(G,H)$, and therefore $\Tor[\Or(G)]{*}(\widehat{J_{(G,H)}},M)$,  only depends on the following family
\[
\calG(H)\defeq\{ K\leq G : K\leq  g_i^{-1}Hg_i \cap g_j^{-1}Hg_j \text{ for some } g_i,g_j\text{ such that }g_iH\neq g_jH \}\subset \calF(H)
\]
 Explicitely, for $K\in \calF(H)$, 
 \[
J_{(G,H)}(G/K)=\begin{cases}
\ker(\dbZ[(G/H)^K]\to \dbZ) & \text{ if }K\neq I \text{ and }K\in \calG(H),\\
0 & \text{otherwise}\end{cases}
\]

As a very particular case, assume $H$ is a malnormal subgroup of $G$. Then $\calG(H)$ is the trivial family, hence  $J(G,H)(G/K)=0$ for every $K\in \calF(H)$. Thus we partially recover Theorem~\ref{comparisontheorem}, that is, we have that the comparison homomorphism $\varphi_i$ is an isomorphism, for $i\geq2$, provided $H$ is a malnormal subgroup of $G$. 

\section{The L\"uck--Weiermann construction and some exact sequences}\label{LWconstruction}

In this section, we obtain in Corollary \ref{MayerVietoris}, a Takasu-type long exact sequence  for Adamson homology. To get such a sequence we describe a construction of the classifying space $E_{\calF(H)}G$ similar to \eqref{eq:tak.po}, using \cite[Theorem~2.3]{LW12}. 

First we need some notation. Let $G$ be a group. Consider two families $\calF\subseteq \calG$ of subgroups of $G$. 
Suppose we have an equivalence relation $\sim$ on $\calG \setminus \calF$ satisfying:

\begin{itemize}
\item \textbf{Closed under taking subgroups:} For $H,K\in \calG \setminus \calF$, with $H\subseteq K$ then must be $H\sim K$, and
\item \textbf{Invariant under conjugation:} For $H,K\in \calG \setminus \calF$ and $g\in G$, then must be $H\sim K$ if and only if $g^{-1}Hg \sim g^{-1} Kg$.
\end{itemize}

For $H\in \calG \setminus \calF$, define the subgroup of $G$
\begin{equation*}
N_G[H] = \{ g\in G ~|~ g^{-1} Hg\sim H \}.
\end{equation*}
Also define the following family of subgroups of $N_G[H]$
\[ \calG[H] = \{ K\subseteq N_G[H] ~|~  K\in \calG \setminus \calF, K\sim H \} \cup \{ K\subseteq N_G[H] ~|~ K\in \calF \}. \]

As an immediate consequence of \cite[Theorem 2.3]{LW12} we have the following,

\begin{theorem}\label{LW:pushout}
With the same notation as above,
let $H$ be a subgroup of $G$, let $\calG$ the family of subgroups of $G$ generated by $H$, and let $\calF$ be a subfamily of $\calG$. Choose models for $E_{\calF}N_G[H]$ and $E_{\calG[H]}N_G[H]$, 
and a model for $E_{\calF}G$. Now consider $X$ defined by the $G$-pushout:
	 $$ \xymatrix{ G\times_{N_G[H]}E_{\calF}N_G[H] \ar[r]^-i \ar[d]^{\id_G\times_{N_G[H]}f_{H}} & E_{\calF}G \ar[d] \\ G\times_{N_G[H]} E_{\calG[H]}N_G[H] \ar[r] & X}$$
	 where the maps starting from the left upper corner are cellular and one of them is an inclusion of $G$-CW-complexes. Then $X$ is a model for $E_{\calG}G$.
\end{theorem}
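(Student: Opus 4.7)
The plan is to apply \cite[Theorem 2.3]{LW12} directly. That theorem, given families $\calF \subseteq \calG$ of subgroups of a group $G$ together with an equivalence relation $\sim$ on $\calG \setminus \calF$ that is closed under taking subgroups and invariant under conjugation, produces a model for $E_{\calG}G$ as a $G$-pushout
$$\coprod_{[H_i]} G \times_{N_G[H_i]} E_{\calF} N_G[H_i] \longrightarrow E_{\calF}G$$
glued along induction to $\coprod_{[H_i]} G \times_{N_G[H_i]} E_{\calG[H_i]} N_G[H_i]$, where $\{H_i\}$ ranges over a set of representatives of the $G$-conjugacy classes of $\sim$-equivalence classes.

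First I would argue that, in our setting, the coproduct reduces to a single term. Since $\calG = \calF(H)$ is the family generated by $H$, every $K \in \calG$ is subconjugate to $H$; together with the closure properties of $\sim$ this forces a single $G$-conjugacy class of $\sim$-equivalence classes in $\calG \setminus \calF$, which can be taken to be represented by $H$ itself. With this reduction, the Lück--Weiermann pushout becomes exactly the square in the statement, with upper-left corner $G \times_{N_G[H]} E_{\calF} N_G[H]$ and lower-left corner $G \times_{N_G[H]} E_{\calG[H]} N_G[H]$. Next I would identify the two maps: the horizontal map $i$ is induced by the canonical $N_G[H]$-map $E_{\calF} N_G[H] \to E_{\calF}G$ (the restriction of $E_{\calF}G$ to $N_G[H]$ is a model for $E_{\calF \cap N_G[H]} N_G[H]$, so such a map exists and is unique up to $N_G[H]$-homotopy) followed by induction to $G$, while the vertical map $\id_G \times_{N_G[H]} f_H$ is induced by the classifying map $f_H \colon E_{\calF} N_G[H] \to E_{\calG[H]} N_G[H]$, which exists because $\calF \cap N_G[H] \subseteq \calG[H]$ as families of subgroups of $N_G[H]$. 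The cellularity and inclusion hypotheses required by \cite[Theorem 2.3]{LW12} are arranged by the standard mapping cylinder trick, as in Section~\ref{sec:G-ht}.

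The conclusion then follows immediately from \cite[Theorem 2.3]{LW12}. The main obstacle, were one to reprove the Lück--Weiermann theorem from scratch, would be verifying that the fixed point sets $X^K$ are contractible for all $K \in \calG$; this requires a delicate argument that takes fixed points of the pushout square and treats separately the cases $K \in \calF$ (where contractibility comes from $(E_{\calF}G)^K$) and $K \in \calG \setminus \calF$ (where, after passing to a conjugate lying in $N_G[H]$, contractibility comes from $(E_{\calG[H]} N_G[H])^K$, while the two other corners become empty or contribute only a contractible replacement). Since this is exactly the content of \cite[Theorem 2.3]{LW12}, the proof of our statement is merely a matter of matching notation and verifying the reduction to a single equivalence class described above.
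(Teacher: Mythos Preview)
Your proposal is correct and matches the paper's approach exactly: the paper simply states that the theorem is ``an immediate consequence of \cite[Theorem 2.3]{LW12}'' and gives no further argument, so your reduction---observing that because $\calG=\calF(H)$ every nontrivial $K\in\calG\setminus\calF$ is subconjugate to $H$, hence the index set in the L\"uck--Weiermann pushout collapses to the single class $[H]$---is precisely the verification the paper leaves implicit.
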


\begin{remark}
The $G$--pushout from the previous theorem, can be seen as a kind of analogue of the $G$-pushout used to define the Takasu's space $T(G,H)$. This point of view might be helpful since this pushout 
construction gives us a more explicit description of the difference between the classifying space $E_{\calF(H)}G$ and Takasu's space $T(G,H)$.
\end{remark}

\begin{corollary}\label{MayerVietoris}
Let $G$ be a group, let $\calG$ be the family generated by a subgroup $H$ of $G$, let $\calF$ be a subfamily of $\calG$ and let $M$ be an $\Or(G,\calG)$--module. Then we have the following Mayer--Vietoris exact sequence
\begin{multline*} 
\scriptsize
\cdots \to H_n^{N_G[H]}(E_{\calF}N_G[H];\res^G_{N_G[H]}M) \to H_n^{G}(E_{\calF} G;M) \oplus  H_n^{N_G[H]}(E_{\calG[H]} N_G[H];\res^G_{N_G[H]}M)\\ \to H_n([G:H];M)\to \cdots 
\end{multline*}
In particular, if $\calF$ is the trivial family, then we have the following exact sequence
\begin{multline*}
\cdots \to H_n(N_G[H];\res^G_{N_G[H]}M(N_G[H]/I)) \to \\ H_n(G;M(G/I)) \oplus  H_n^{N_G[H]}(E_{\calG[H]} N_G[H];\res^G_{N_G[H]}M(N_G[H]/I)  \to H_n([G:H];M)\to \cdots 
\end{multline*}
\end{corollary}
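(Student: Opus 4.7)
The plan is to derive the Mayer--Vietoris sequence directly from the $G$-pushout produced by Theorem \ref{LW:pushout}, and then convert the induced-space terms via Lemma \ref{inductivestructureforbredon}. First, I would instantiate Theorem \ref{LW:pushout} with $\calG=\calF(H)$ and the chosen subfamily $\calF$; this yields a $G$-pushout
\[
\xymatrix{G\times_{N_G[H]} E_\calF N_G[H] \ar[r]^-i \ar[d] & E_\calF G \ar[d] \\ G\times_{N_G[H]} E_{\calG[H]}N_G[H] \ar[r] & X}
\]
in which $i$ is a cellular $G$-inclusion and $X$ is a model for $E_{\calF(H)}G$. By the definition of Adamson homology given in Section \ref{ssection:Adamsonhomology}, $H_n^G(X;M)=H_n([G:H];M)$.

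Second, since $i$ is an inclusion of $G$-CW-complexes, applying the fixed-point functor $\gmap(-,-)$ and then taking cellular chains produces a short exact sequence of chain complexes of contravariant $\Or(G)$-modules,
\[
0\to C_*\bigl(G\times_{N_G[H]}E_\calF N_G[H]\bigr) \to C_*(E_\calF G)\oplus C_*\bigl(G\times_{N_G[H]}E_{\calG[H]}N_G[H]\bigr) \to C_*(X)\to 0.
\]
Each chain module is a direct sum of representables $\dbZ[-,G/K]$, so tensoring over $\Or(G)$ with the covariant module $M$ preserves exactness. Passing to homology yields a Mayer--Vietoris long exact sequence in Bredon homology, whose induced-space terms take the form $H_n^G(G\times_{N_G[H]}Y;M)$ for $Y=E_\calF N_G[H]$ or $Y=E_{\calG[H]}N_G[H]$.

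Third, I would apply Lemma \ref{inductivestructureforbredon} with $H$ replaced by $N_G[H]$ to rewrite each such term as $H_n^{N_G[H]}(Y;\res^G_{N_G[H]}M)$; substituting into the Mayer--Vietoris sequence gives the first exact sequence in the statement. For the specialization $\calF=\triv$, the relevant classifying spaces are $EG$ and $EN_G[H]$, and Example \ref{localcoefficienthomology} identifies $H_n^G(EG;M)$ with $H_n(G;M(G/I))$ and $H_n^{N_G[H]}(EN_G[H];\res^G_{N_G[H]}M)$ with the ordinary group homology of $N_G[H]$ having coefficients $(\res^G_{N_G[H]}M)(N_G[H]/I)=M(G/I)$. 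The main delicate step that I would need to verify carefully is that the $G$-pushout produces an honest short exact sequence of chain complexes of $\Or(G)$-modules rather than merely a homotopy pushout; this is precisely why Theorem \ref{LW:pushout} is formulated so that $i$ is a cellular inclusion of $G$-CW-complexes.
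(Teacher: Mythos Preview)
Your proposal is correct and follows exactly the implicit argument the paper has in mind: the corollary is stated without proof as an immediate consequence of Theorem~\ref{LW:pushout}, and your derivation---Mayer--Vietoris from the $G$-pushout, then Lemma~\ref{inductivestructureforbredon} to pass from $\ind$ to $\res$, then Example~\ref{localcoefficienthomology} for the $\calF=\triv$ specialization---is precisely how one unpacks that. One minor point: Theorem~\ref{LW:pushout} only guarantees that \emph{one} of the two maps out of the upper-left corner is a cellular inclusion (not necessarily $i$), but either choice suffices to make the square a genuine $G$-CW-pushout and hence to produce the short exact sequence of cellular $\Or(G)$-chain complexes you need.
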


\subsection{Malnormal subgroups}
Let $H$ be a malnormal subgroup of $G$. We consider $\triv \subset\calF(H)$. Define the equivalence relation $\sim$ on $\calF(H)\setminus \triv$ as follows: Let $K_1$ and $K_2$ be groups in $\calF(H)\setminus \triv$, 
then there exists $g_1, g_2\in G$ such that $K_1\subseteq g_1H{g_1}^{-1}$ and $K_2\subseteq g_2H{g_2}^{-1}$. Hence we say that $K_1\sim K_2$ if and only if $g_{1}H{g_1}^{-1}\cap g_2H{g_2}^{-1}\neq I$. Then $N_G[H]=\{ g\in G ~|~ gHg^{-1} \cap H \neq I \}=H$ and $\calG[H]$ is the family of all subgroups of $H$.

Consider an $\Or(G)$--module $M$, then applying Corollary~\ref{MayerVietoris} we have the following long exact sequence
\begin{equation*}
\cdots \to H_n^H(E_{\triv}H;M)\to H_n(G;M) \oplus H_n^B(E_{\all}H;M)\to H_n([G:H];M)\to \cdots
\end{equation*}
since $E_{\all}B$ has as a model the one-point space. Hence the above sequence translates to
\begin{equation*}
\cdots \to H_{n+1}([G:H];M) \to H_n(H;M)\to H_n(G;M) \to H_n([G:H];M)\to \cdots
\end{equation*}

\subsection{The long exact sequence for good triples.}
In this subsection we will show that, for certain triples $K\leq H\leq G$, we have a long exact sequence in Adamson homology similar to the sequence of a triple of spaces in singular homology.

Consider a triple of groups $K\leq H\leq G$, and consider the families $\calF(K)$ and $\calF(H)$ generated by $K$ and $H$ respectively. Define the following equivalence relation in 
$\calF(H)\setminus \calF(K)$: for any $H_1$, $H_2$ in $\calF(H)\setminus \calF(K)$, there exist $g_1, g_2 \in G$ such that $H_1 \leq g_1H\inv{g_1}$ and $H_2 \leq g_2H\inv{g_2}$, then we set
\[
H_1\sim H_2 \Longleftrightarrow g_1H\inv{g_1} \cap g_2H\inv{g_2} \in \calF(H) \setminus \calF(K),
\]
i.e. $g_1H\inv{g_1} \cap g_2H\inv{g_2}$ is not subconjugate to $K$.

It is straightforward that this equivalence relation satisfies the required properties in order to use the L\"uck--Weiermann construction.

We say that $K\leq H \leq G$ is \emph{a good triple}, if $H=N_G[H]=\{g\in G ~|~ H\sim gH\inv{g} \}$, i.e. if, for all $g\in G\setminus H$, $H\cap gH\inv{g}$ is subconjugated to $K$.

\begin{theorem}
Using the above notation. Suppose that $K\leq H \leq G$ is a good triple. Then, for all $\Or(G)$--module $M$, we have a long exact sequence such that, for all $n>0$, looks like
\[
\cdots \to \adamson{n}{H}{K}{\res_H^G M} \to \adamson{n}{G}{K}{M} \to \adamson{n}{G}{H}{M} \to  \adamson{n-1}{H}{K}{\res_H^G M} \to \cdots,
\]
while for $n=0$ we have
\[
\cdots \to \adamson{0}{H}{K}{\res_H^G M} \to \adamson{0}{G}{K}{M} \oplus M(G/H) \to \adamson{0}{G}{H}{M} \to 0.
\]
\end{theorem}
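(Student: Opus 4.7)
The plan is to invoke Theorem~\ref{LW:pushout} with $\calF=\calF(K)$, $\calG=\calF(H)$, and the equivalence relation $\sim$ introduced just above the statement. First I would verify the two hypotheses on $\sim$. Closure under subgroups: if $J_1\leq J_2$ both lie in $\calF(H)\setminus\calF(K)$ and $J_2\leq gHg^{-1}$, then $J_1\leq gHg^{-1}$ as well, so the intersection condition defining $J_1\sim J_2$ reduces to asking whether $gHg^{-1}$ itself belongs to $\calF(K)$, which fails since $J_2\notin\calF(K)$; hence $J_1\sim J_2$. Conjugation invariance is immediate from the symmetric form of the definition.

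Next, I would use the good-triple hypothesis $N_G[H]=H$ to identify the pieces of the pushout. The family $\calG[H]$ consists of subgroups $J\leq H$ either lying in $\calF(K)$, or lying in $\calF(H)\setminus\calF(K)$ and equivalent to $H$ under $\sim$. Since every subgroup of $H$ automatically belongs to $\calF(H)$, and since the closure-under-subgroups property forces any $J\leq H$ with $J\notin\calF(K)$ to satisfy $J\sim H$, we conclude $\calG[H]=\all$, the family of all subgroups of $H$. Consequently $E_{\calG[H]}H$ is $H$-homotopy equivalent to a point, so $G\times_H E_{\calG[H]}H\simeq G/H$.

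Theorem~\ref{LW:pushout} then produces the $G$-pushout
\begin{equation*}
\xymatrix{
G\times_H E_{\calF(K)}H \ar[r]\ar[d] & E_{\calF(K)}G \ar[d]\\
G/H \ar[r] & E_{\calF(H)}G,
}
\end{equation*}
and Bredon homology with coefficients in $M$ delivers the Mayer--Vietoris long exact sequence of Corollary~\ref{MayerVietoris}. I then identify each term: by definition $H_n^G(E_{\calF(K)}G;M)=\adamson{n}{G}{K}{M}$ and $H_n^G(E_{\calF(H)}G;M)=\adamson{n}{G}{H}{M}$; by Lemma~\ref{inductivestructureforbredon} one has $H_n^G(G\times_H E_{\calF(K)}H;M)\cong H_n^H(E_{\calF(K)}H;\res_H^G M)$, which, read as Adamson homology of $(H,K)$, equals $\adamson{n}{H}{K}{\res_H^G M}$; and $H_n^G(G/H;M)$ equals $M(G/H)$ for $n=0$ and vanishes otherwise. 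Assembling these identifications produces the claimed long exact sequence for $n\geq 1$, and at the tail the $n=0$ segment reads $\adamson{0}{H}{K}{\res_H^G M}\to\adamson{0}{G}{K}{M}\oplus M(G/H)\to\adamson{0}{G}{H}{M}\to 0$, the terminal $0$ coming from Mayer--Vietoris vanishing below degree zero.

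The main obstacle I foresee is the identification $H_n^H(E_{\calF(K)}H;\res_H^G M)=\adamson{n}{H}{K}{\res_H^G M}$, which tacitly requires that the family $\calF(K)\cap H$ in $H$ coincide with $\calF_H(K)$, the family generated by $K$ inside $H$; equivalently, that every subgroup of $H$ subconjugate to $K$ in $G$ is already subconjugate to $K$ inside $H$. For $g\notin H$ with $g^{-1}Jg\leq K$, the inclusion $J\leq H\cap gHg^{-1}$ combined with the good-triple hypothesis forces $H\cap gHg^{-1}\in\calF(K)$, and a careful iteration (or a Bass--Serre-tree argument using the tree implicit in the equivalence relation $\sim$) should reduce the conjugating element to an element of $H$. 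This combinatorial step is the delicate point that must be executed rigorously for the chain of identifications above to close up.
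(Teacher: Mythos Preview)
Your approach is exactly the paper's: apply Corollary~\ref{MayerVietoris} with $\calF=\calF(K)$ and $\calG=\calF(H)$, observe that the good-triple hypothesis gives $N_G[H]=H$ and $\calG[H]=\all$ so that $E_{\calG[H]}H$ is a point, and read off the long exact sequence. You supply considerably more detail than the paper does (verifying the axioms for $\sim$, explicitly identifying each Mayer--Vietoris term), whereas the paper's proof is two sentences.

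The concern you raise in your final paragraph --- that the identification $H_n^H(E_{\calF(K)\cap H}H;\res_H^G M)=\adamson{n}{H}{K}{\res_H^G M}$ requires $\calF(K)\cap H=\calF_H(K)$ --- is a genuine subtlety, and the paper does not address it either: its proof simply asserts that the conclusion follows from Corollary~\ref{MayerVietoris} once the $E_{\calG[H]}H$ term collapses. So you have not introduced any gap beyond what is already implicit in the paper; rather, you have correctly located the one point where both arguments are incomplete as written. Your sketched reduction (using the good-triple condition to push $J\leq H\cap gHg^{-1}\in\calF(K)$ and then iterate) is the natural line of attack, but as you note it does not obviously terminate, and neither you nor the paper supplies the missing step.
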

\begin{proof}
Note that for a good triple $N_G[H]=H$ and $\calG[H]$ is the family of all subgroups of $N_G[H]=H$, since $H$ always belongs to $\calG[H]$ by definition. Hence $E_{\calG[H]}N_G[H]$ has as a model the one-point space.

The conclusion follows from Corollary \ref{MayerVietoris} by setting $\calF=\calF(K)$, $\calG=\calF(H)$.
\end{proof}

\begin{example}
If $H$ is a malnormal subgroup of $G$, then it follows that for every $K\leq H$, the triple $K\leq H \leq G$ is a good triple. 
\end{example}

\begin{example}
Consider the triple $T\leq B \leq G$, where $G=SL_2(\dbC)$, 
\begin{equation*}
B=\left\{ \left(\begin{matrix}a & b\\
0 & a^{-1}\end{matrix}\right) ~\middle|~ a\in\dbC^*,b\in \dbC  \right\}, \text{ and } T=\left\{ \left(\begin{matrix}a & 0\\
0 & a^{-1}\end{matrix}\right) ~\middle|~ a\in\dbC^*  \right\}.
\end{equation*}
We also consider this groups endowed with the discrete topology. Then, we claim that they are a good triple. In fact, since $B$ is a proper maximal subgroup of $G$ 
(see \cite[Proposition XIII.8.2]{La02}), we conclude that $N_G[B]$ is either $G$ or $B$. Therefore it suffices to exhibit an element in $G$ that it does not belong to $N_G[B]$, i.e. 
an element $g\in G$ such that $gB\inv{g}\cap B$ is subconjugate to $T$. A direct computation shows that we can take $g=\begin{pmatrix}0 & 1\\
-1 & 0\end{pmatrix}$.
\end{example}

\section{Examples}\label{sec:exa}

\subsection{Normal subgroups}

Let $G$ be a group and $H$ a normal subgroup of $G$. Let $M$ be a $G$--module. Then by Corollary \ref{adamsongmodules}
\[
\adamson{*}{G}{H}{M} \cong H_*(G/H;M_H)
\]
Recall the Lyndon--Hochshild--Serre spectral sequence
\[
E^2_{p,q}= H_p(G/H;H_q(H;\res_H^GM)) \Longrightarrow H_{p+q} (G;M)
\]

\begin{proposition}\label{normalsubgroup}
Suppose that we have  a normal subgroup $H$ of $G$,  and a $G$-module $M$ such that $E^2_{p,q}=0$, for all $p,q>0$. Then we have the commutative diagram

\[\scriptsize
\xymatrix{\cdots\ar[r] & H_{n}(H;\res_{H}^{G}M)_{G/H}\ar[r] & H_{n}(G;M)\ar[r] & H_{n}(G/H;M_{H})\ar[r] & H_{n-1}^ {}(H;\res_{H}^{G}M)_{G/H}\ar[r] & \cdots\\
\cdots\ar[r] & H_{n}(H;\res_{H}^{G}M)\ar[r]\ar[u] & H_{n}(G;M)\ar[r]\ar@{=}[u] & H_{n}(G,H;M)\ar[r]\ar[u] & H_{n-1}(H;\res_{H}^{G}M)\ar[r]\ar[u] & \cdots
}
\]
where the first vertical arrow is the quotient map, the second is the identity, and the third is the comparison map composed with the isomorphism above.
\end{proposition}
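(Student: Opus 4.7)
The plan is to build the top row from the Lyndon--Hochschild--Serre spectral sequence
\[
E^2_{p,q}=H_p(G/H;H_q(H;\res_H^GM)) \Longrightarrow H_{p+q}(G;M),
\]
and then verify the commutativity of each square by identifying the vertical arrows with edge homomorphisms and transgressions of this spectral sequence.

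For the top row, under the hypothesis $E^2_{p,q}=0$ for $p,q>0$, only the two coordinate axes of the $E^2$-page survive, and the same vanishing persists on every subsequent page. All differentials $d^r$ whose source or target sits in the off-axis region vanish, so the only possibly non-trivial differentials are the ``long'' transgressions $d^{n+1}\colon H_{n+1}(G/H;M_H)\to H_n(H;\res_H^GM)_{G/H}$. The filtration on $H_n(G;M)$ then degenerates into a short exact sequence
\[
0\to H_n(H;\res_H^GM)_{G/H}/\mathrm{im}\,d^{n+1}\to H_n(G;M)\to \ker d^n\to 0,
\]
and splicing these over $n$ through the transgressions yields the top long exact sequence.

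For commutativity, the squares involving the quotient $H_n(H;\res_H^GM)\twoheadrightarrow H_n(H;\res_H^GM)_{G/H}$ are straightforward: the Takasu map $H_n(H;\res_H^GM)\to H_n(G;M)$ is induced by the inclusion $i\colon H\hookrightarrow G$, and normality of $H$ makes $i$ equivariant with respect to inner automorphisms, which act trivially on $H_n(G;M)$; hence $i_*$ factors through the coinvariants and agrees there with the LHS edge map on the $p=0$ axis. The crucial square is
\[
\xymatrix{
H_n(G;M) \ar[r] \ar@{=}[d] & H_n(G,H;M) \ar[d]^{\varphi} \\
H_n(G;M) \ar[r] & H_n(G/H;M_H),
}
\]
which asks that the composite $H_n(G;M)\to H_n(G,H;M)\xrightarrow{\varphi} H_n([G:H];M)\cong H_n(G/H;M_H)$ is the $q=0$ edge map of LHS. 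I would argue this as follows: for $n\geq 2$, Corollary~\ref{absolute:homology:takasu} identifies $H_n(G,H;M)$ with $H_n^G(T(G,H);M)$, under which the Takasu map becomes the one induced by the inclusion in the pushout~\eqref{eq:tak.po}; composing with the comparison map then yields the map induced by the canonical $G$-map $EG\to E_{\calF(H)}G$. Since $H$ is normal, $E_{\calF(H)}G$ is a model for $E(G/H)$ regarded as a $G$-space via $\pi\colon G\to G/H$, so by Proposition~\ref{prop:3.IT} and Corollary~\ref{adamsongmodules} the composite is precisely the homology map induced by $\pi$, i.e.\ the $q=0$ edge map of LHS. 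The remaining square (Takasu connecting versus the transgression $d^n$) then follows by naturality.

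The main obstacle I foresee is keeping coherent the several natural isomorphisms at play in the crucial square (Example~\ref{bredon:coef:gmodule}, Proposition~\ref{prop:3.IT}, Corollary~\ref{absolute:homology:takasu}, and the LHS edge map itself). I would navigate this by working at the chain level throughout and appealing to the naturality of LHS under the evident morphism of group extensions $(1\to H\to G\to G/H\to 1)\to (1\to 1\to G/H\to G/H\to 1)$.
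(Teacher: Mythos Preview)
Your proposal is correct and follows essentially the same route as the paper. The paper's proof is terser: it derives the top row from the spectral sequence hypothesis exactly as you do, and for commutativity it simply records the $G$-homotopy commutative diagram of spaces
\[
\xymatrix{G\times_{H}EH\ar[d]^{=}\ar[r] & EG\ar[d]^{=}\ar[r] & T(G,H)\ar[d]\\
G\times_{H}EH\ar[r] & EG\ar[r] & E_{\calF(H)}G=E(G/H)}
\]
together with the definition of the transgression in the Lyndon--Hochschild--Serre spectral sequence. Your argument unpacks this same picture square by square: your treatment of the crucial middle square is exactly the identification of $EG\to E_{\calF(H)}G$ with the map induced by $\pi$ (hence the $q=0$ edge map), and your plan to handle the connecting-versus-transgression square via naturality of LHS under the morphism of extensions is precisely what the paper's appeal to ``the definition of the transgression'' \cite[p.~185]{Mc01} amounts to. The only place you add something the paper leaves implicit is the observation that $i_*$ factors through $G/H$-coinvariants, which is correct and harmless.
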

\begin{proof}
The top row is straightforward from the spectral sequence and the hypothesis $E^2_{p,q}=0$, for all $p,q>0$. 
The commutativity of the diagram follows from the following commutative diagram (up to $G$--homotopy)
\[
\xymatrix{G\times_{H}EH\ar[d]^{=}\ar[r] & EG\ar[d]^{=}\ar[r] & T(G,H)\ar[d]\\
G\times_{H}EH\ar[r] & EG\ar[r] & E_{\mathcal{F}(H)}G=E(G/H)
}
\]
and the definition of the transgression (see \cite[p. 185]{Mc01}) homomorphism in the Lyndon--Hochshild--Serre spectral sequence.
\end{proof}

\begin{corollary}
Consider $G$, $H$, and $M$ as in the previous proposition. If the action of $G$ on $H$ is by inner automorphisms, then the comparison homomorphism
\[\varphi \colon  \takasu{*}{G}{H}{M} \to \adamson{*}{G}{H}{M} \]
is an isomorphism, for all $G$--modules $M$.
\end{corollary}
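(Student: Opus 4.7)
The plan is to apply the five lemma to the commutative ladder displayed in Proposition~\ref{normalsubgroup}. Since the middle vertical arrow is the identity of $H_n(G;M)$, in order to conclude that the third vertical arrow --- the composition of the comparison homomorphism with the isomorphism $\homo[n]([G:H];M)\cong H_n(G/H;M_H)$ of Corollary~\ref{adamsongmodules} --- is an isomorphism, it suffices to prove that the outer vertical arrows, namely the canonical projections
\[
H_n(H;\res_H^G M)\twoheadrightarrow H_n(H;\res_H^G M)_{G/H},
\]
are isomorphisms for every $n\geq 0$. Equivalently, I need to show that $G/H$ acts trivially on $H_n(H;\res_H^G M)$.

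First I would unpack the inner automorphism hypothesis: it is equivalent to asserting that the canonical map $G\to \operatorname{Aut}(H)$ takes values in $\operatorname{Inn}(H)$, which in turn is equivalent to the decomposition $G = H\cdot C_G(H)$. Every coset in $G/H$ is then represented by some $c\in C_G(H)$, and its induced action on $H_n(H;\res_H^G M)$ is given by the $H$-equivariant pair $(\operatorname{id}_H,\;m\mapsto cm)$; the $H$-equivariance of $m\mapsto cm$ is automatic because $c$ centralizes $H$.

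The main obstacle is to verify that the map on homology induced by this pair is the identity. The plan is to realize $(\operatorname{id}_H,\;m\mapsto cm)$ as the restriction to $H$ of conjugation by $c$ inside $G$. Since conjugation by $c$ is an inner automorphism of $G$, it induces the identity on $H_n(G;M)$ by the classical fact that inner automorphisms act trivially on group homology. A chain-level naturality argument comparing compatible bar resolutions of $H$ and $G$ --- together with the degeneracy of the Lyndon--Hochschild--Serre spectral sequence enforced by the standing hypothesis of Proposition~\ref{normalsubgroup} --- will then force the induced map on $H_n(H;\res_H^G M)$ to be the identity as well. Once the $G/H$-action has been shown to be trivial, the outer vertical projections in the ladder become isomorphisms, the five lemma applies, and $\varphi$ is the desired isomorphism.
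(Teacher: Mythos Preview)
Your overall strategy---apply the five lemma to the ladder of Proposition~\ref{normalsubgroup}---is exactly the paper's approach; the paper's one-line proof simply asserts that under the inner-automorphism hypothesis the left vertical arrow becomes the identity. You correctly recognise that this assertion needs justification, and your reduction to showing that each $c\in C_G(H)$ acts trivially on $H_n(H;\res_H^G M)$ via the pair $(\operatorname{id}_H,\,m\mapsto cm)$ is the right reformulation.

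The gap is in your last step. Knowing that conjugation by $c$ induces the identity on $H_n(G;M)$ only tells you that the composite $H_n(H;M)\xrightarrow{c_*}H_n(H;M)\to H_n(G;M)$ equals the plain corestriction; this says nothing about $c_*$ itself unless corestriction is injective. The spectral-sequence hypothesis makes the edge map $H_n(H;M)_{G/H}\hookrightarrow H_n(G;M)$ injective, but that map already starts from the coinvariants---precisely the quotient you are trying to show is trivial---so invoking it here is circular. Concretely, take $G=\dbZ/2\times\dbZ/2$, $H=\dbZ/2\times 1$, and $M=\dbZ[G/H]$. Then $G$ is abelian, so it certainly acts on $H$ by inner automorphisms, and a Shapiro-lemma check gives $E^2_{p,q}=H_p(G/H;H_q(H;M))=0$ for all $p,q>0$; yet $G/H$ swaps the two summands of $H_1(H;M)\cong(\dbZ/2)^2$, so the projection to coinvariants has nontrivial kernel. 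In fact one computes $\takasu{2}{G}{H}{M}\cong\dbZ/2$ while $\adamson{2}{G}{H}{M}\cong H_2(G/H;M_H)=0$, so the comparison map itself fails to be an isomorphism here. No argument along the proposed lines can repair this; the corollary as stated appears to be false.
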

\begin{proof}
It follows from the previous proposition and the five lemma, since the left vertical arrow in the Proposition~\ref{normalsubgroup} is then the identity.
\end{proof}

\begin{remark}
Note that the hypothesis from Proposition \ref{normalsubgroup} and its corollary depend on the coefficients $M$. Therefore this does not lead to a contradiction with Theorem \ref{comparisontheorem}.
\end{remark}

\begin{example}
As a straightforward example we have the cyclic groups $(C_n,C_m)$ with $m$ and $n$ relative primes and $M$ equal to the trivial $G$--module $\dbZ$. In fact, using the universal 
coefficient theorem and the fact that $H_i(C_m;\dbZ)$ vanishes for $i$ odd, we have that 
\[
E^2_{p,q}= H_p(C_n/C_m;H_q(C_m;M))\cong 0
\]
for all $p,q>0$. Moreover, since both $\adamson{i}{C_n}{C_m}{\dbZ}$ and $\takasu{i}{C_n}{C_m}{\dbZ}$ are finite groups for every $i>0$, we can conclude that the comparison 
homomorphism is actually an isomorphism. We will show in Subsection \ref{cyclegroups} below that, the hypothesis of $m$ and $n$ being relative prime numbers, is necessary.
\end{example}

\subsection{The pair $(K\times H,H)$}\label{products}
Throughout this section we assume the coefficients module is $\underline{\dbZ}$. Consider the pair $(K\times H,H)$, where we are making an abuse of 
notation by denoting $I\times H$ by $H$. Then we have the split short exact sequence
\[
I\to H\underset{j}{\overset{i}{\rightleftarrows}} K\times H \to K \to I,
\]
and the projection homomorphism $j\colon K\times H\to H$ such that $j\circ i$ is the identity homomorphism on $H$.

Now, from Proposition \ref{takasulongexactsequence} we have the following long exact sequence for Takasu's relative homology
\[
\cdots \to H_{n+1}(K\times H,H) \to H_n(H) \xrightarrow{i_*} H_n(K\times H) \to H_n(K\times H,H) \to \cdots
\]
Thus, the homomorphism induced by $j$ leads to short split exact sequences
 \[
I\to H_n(H) \underset{j_*}{\rightleftarrows} H_n(K\times H) \to H_n(K\times H,H) \to I
 \]
 for all $n\geq0$. Therefore 
 \[
 H_n(K\times H,H) \cong H_n(K\times H)/H_n(H)
 \]
 for all $n\geq 1$.

On the other hand, $H_n(G)=H_n(K\times H)$ can be calculated using the K\"unneth formula
\[
H_n(K\times H)\cong \left( \bigoplus_{i+j=n}H_i(K)\otimes H_j(H)\right)\oplus \left( \bigoplus_{i+j=n-1} \Tor{}(H_{i}(K),H_j(H))\right).
\]

While for Adamson's group homology, from Proposition \ref{prop:3.IT}, we have
\[
H_i([K\times H,H])\cong H_i(K\times H /H) = H_i(K)
\]

In this case the comparison homomorphism $\varphi_i\colon H_i(K\times H,H) \to H_i ([K\times H:H])$ can be completely described. In fact, it is not difficult to see that this 
homomorphism comes from projecting $H_*(G,H)$ onto the copy of $H_*(K)$ contained as a summand (described above as the column in the spectral sequence-type array). 
Hence the comparison homomorphism is surjective and the kernel is given by
\[
\bigoplus_{\substack{i+j=n \\ i>0, j>0} }H_i(K;H_j(H)) \cong
\left( \bigoplus_{\substack{i+j=n \\ i>0, j>0} }H_i(K)\otimes H_j(H)\right)\oplus \left( \bigoplus_{\substack{i+j=n-1 \\ i>0, j>0} } \Tor{}(H_{i}(K),H_j(H))\right)
\]

For the case of more general coefficients an explicit description of the comparison homomorphism, seems to be more complicated.

\subsection{Finite cyclic groups}\label{cyclegroups}
Let $C_i$ denote the cyclic group of order $i$. In \cite[Example 7.3]{AC17}, the authors exhibit the pair $(C_4,C_2)$ as an example where the Adamson's and Takasu's homology groups are isomorphic, 
which might sound as a contradiction with Theorem \ref{comparisontheorem}. Nevertheless, they do not mention anything about this isomorphism being related to the comparison homomorphism. 
In this example we show that the comparison homomorphism $\varphi_i\colon \takasu{i}{C_4}{C_2}{\dbZ} \to \adamson{i}{C_4}{C_2}{\dbZ} $ vanishes for for all $i\geq2$, in particular, it is not an isomorphism.

Let $G=C_n$ be the cyclic group of order $n$ generated by the element $t$. Consider  $N=1+t+t^2+\cdots+ t^{n-1}$, then
\begin{equation}
\xymatrix{\nu:\cdots\ar[r]^{N}&\dbZ G \ar[r]^{t-1}&\dbZ G\ar[r]^{N}&\dbZ G\ar[r]^{t-1}& \dbZ G\ar[r]^{\varepsilon} &\dbZ\ar[r]&0}
\end{equation}
is a $G$--projective resolution of the trivial $G$--module $\dbZ$, where the homomorphisms are multiplication by $t-1$ and $N$, and  $\varepsilon$ is the augmentation homomorphism (see for instance \cite[Pag. 20]{Brown-COG}).

On the other hand, consider a subgroup $C_m=H$ of $G$. Hence, $H$ is normal and the  quotient group $G/H$ is also cyclic generated by $tH$, so  
\begin{equation}\label{resolution-c4}
\xymatrix{\nu':\cdots\ar[r]^{N'}&\dbZ[G/H] \ar[r]^{(t-1)H}&\dbZ [G/H]\ar[r]^{N'}&\dbZ [G/H]\ar[r]^{(t-1)H}& \dbZ [G/H]\ar[r]^{\varepsilon} &\dbZ\ar[r]&0},
\end{equation}
where $N'=H+tH+\cdots +t^{n/m-1}H$,  is a $G/H$--projective resolution of $\dbZ$.

From Section \ref{takasuviaderived} and  \cite[Lemma 5.3]{AC17}, we have the following facts,
\begin{itemize}
\item If $P$ is a projective $G$--module, then $I_{(G,H)}(P)$ is also a projective $G$--module;
\item The $G$--homomorphism $\eta\colon\dbZ G\otimes_{\dbZ H}\dbZ \to \dbZ[G/H]$, given by $\eta(g\otimes n)=ngH$, is an isomorphism;
\item Moreover $\eta$ restricts to an isomorphism $I_{(G,H)}(\dbZ)\cong \ker{(\varepsilon:\dbZ[G/H]\to \dbZ)}$;
\item $I_{(G,H)}(M)$ is generated (as $\dbZ$--module) by the elements $x\otimes n- 1\otimes xn$ for all $x\notin H$ and $n\in M$.
\end{itemize}

From now on we will focus on the case $(G,H)=(C_4,C_2)$. We can apply the functor $I_{(G,H)}(-)$ to the resolution $\nu$ in order to obtain a $G$--projective resolution of $I_{(G,H)}(\dbZ)$. 
Since $\nu'$ is an exact sequence, the isomorphism  $\eta\colon I_{(G,H)}(\dbZ)\to \ker{(\varepsilon)}$ can be extended to a morphism between the following exact sequences.

\begin{equation*}
\xymatrix{I_{(G,H)}(\nu):\cdots\ar[r]^{\id\otimes(t-1)} & I_{(G,H)}(\dbZ G)\ar[d]^{\lambda_2}\ar[r]^{\id\otimes N} & I_{(G,H)}(\dbZ G)\ar[d]^{\lambda_1}\ar[r]^{\id\otimes(t-1)} & I_{(G,H)}(\dbZ G)\ar[d]^{\lambda_0}\ar[r]^{\id\otimes \varepsilon} & I_{(G,H)}(\dbZ)\ar[d]^{\eta}\ar[r] & 0\\
\nu'\cdots\ar[r]^{(t+1)H} & \dbZ [G/H]\ar[r]^{(t-1)H} & \dbZ [G/H]\ar[r]^{(t+1)H} & \dbZ [G/H]\ar[r]^{(t-1)H} & \ker(\varepsilon)\ar[r] & 0.}
\end{equation*}

Our following task is to compute $\lambda_i$ for all $i\geq 0$. First we should note that $I_{(G,H)}(\dbZ G)$ is a rank-one free $G$--module with basis $t\otimes 1 -1 \otimes t$. In fact, 
by definition of $I_{(G,H)}(\dbZ G)$ we have the short exact sequence 
\begin{equation*}
0\to I_{(G,H)}(\dbZ G) \to \dbZ[G]\otimes_{\dbZ[H]} \dbZ[G] \to \dbZ[G] \to 0,
\end{equation*}
which splits since $\dbZ[G]$ is a free $G$--module. Therefore $I_{(G,H)}(\dbZ[G])$ is a free abelian group of rank $4$. Now, a direct calculation shows that $t^i(t\otimes 1 - 1\otimes t)$, 
for $i=0,1,2,3$ form a $\dbZ$--basis of $I_{(G,H)}(\dbZ G)$, and we conclude that it is a free $G$--module.

In order to calculate $\lambda_0$, we use the right hand side square of the above diagram:
\begin{align*}
\eta\circ(\id\otimes\varepsilon)(t\otimes1-1\otimes t) & =\eta(t\otimes1-1\otimes1)\\
 & =tH-H=(t-1)H,
\end{align*}
hence we can define $\lambda_0(t\otimes1-1\otimes t)=H$ and extend to a $G$--equivariant map. Now we compute $\lambda_1$ using the second square from right to left.
\begin{align*}
\lambda_{0}\circ(\id\otimes(t-1))(t\otimes1-1\otimes t) & =\lambda_{0}(t\otimes(t-1)-1\otimes(t^{2}-t))\\
 & =\lambda_{0}(t\otimes t-t\otimes1-1\otimes t^{2}+1\otimes t)\\
 & =\lambda_{0}(-(t\otimes1-1\otimes t)-t(t\otimes1-1\otimes t))\\
 & =\lambda_{0}(-(t+1)(t\otimes1-1\otimes t))\\
 & =-(t+1)H,
\end{align*}
therefore we can define $\lambda_1(t\otimes1-1\otimes t)=-H$.
For $\lambda_2$ we have

\begin{align*}
\lambda_{1}\circ(\id\otimes N)(t\otimes1-1\otimes t) & =\lambda_{1}(t\otimes(1+t+t^{2}+t^{3})-1\otimes(1+t+t^{2}+t^{3}))\\
 & =\lambda_{1}((t\otimes1-1\otimes) t-t(t\otimes1-1\otimes t)+t^{2}(t\otimes1-1\otimes t)-t^{3}(t\otimes1-1\otimes t))\\
 & =\lambda_{1}((1-t+t^{2}-t^{3})(t\otimes1-1\otimes t))\\
 & =2(t-1)H
\end{align*}
and we define $\lambda_2(t\otimes1-1\otimes t)=2H$. Proceeding in a similar way we finally obtain the following formulas
\begin{gather*}
\lambda_{2i}(t\otimes1-1\otimes t)=2^iH,\\
\lambda_{2i+1}(t\otimes1-1\otimes t)=-2^iH.
\end{gather*}

And we can conclude that after tensoring with the trivial $G$--module $\dbZ$, we obtain the homomorphisms of abelian groups 
$\lambda_j\otimes Id\colon I_{(G,H)}(\dbZ) \otimes_{\dbZ G} \dbZ \to \dbZ[G/H] \otimes_{\dbZ G} \dbZ$ given by the following formulas
\begin{gather*}
\lambda_{2i}\otimes \id(t\otimes1-1\otimes t)=2^i,\\
\lambda_{2i+1}(t\otimes1-1\otimes t)=-2^i.
\end{gather*}
Finally the comparison homomorphisms at the level of chain complexes looks as follows
\begin{equation}\label{eq:c4.c2}
\xymatrix{\cdots\ar[r] & \dbZ\ar[r]^{0}\ar[d]^{4} & \dbZ\ar[r]^{2}\ar[d]^{2} & \dbZ\ar[r]^{0}\ar[d]^{2} & \dbZ\ar[r]^{2}\ar[d]^{1} & \dbZ\ar[r]\ar[d]^{1} & 0 \\
\cdots\ar[r] & \dbZ\ar[r]^{0} & \dbZ\ar[r]^{2} & \dbZ\ar[r]^{0} & \dbZ\ar[r]^{2} & \dbZ\ar[r] & 0}
\end{equation}
Now we can explicitly describe the comparison homomorphism $\varphi_i\colon \takasu{i}{C_4}{C_2}{\dbZ} \to \adamson{i}{C_4}{C_2}{\dbZ} $. In fact $\varphi_1$ is the identity while $\varphi_i=0$ for all $i\geq2$.

\begin{remark}
In general, the algebraic version of the comparison homomorphism only gives the comparison homomorphisms $\varphi_i\colon \takasu{i}{G}{H}{M} \to \adamson{i}{G}{H}{M}$ 
for $i\geq2$ (see Subsection~\ref{ssec:alg.com.hom} or \cite[\S7.1]{AC17}) because when we remove $\ker{\varepsilon}$ in the projective resolution to get 
the reduced resolution, the kernel becomes $\CC[1]{C}(G/H)$ but in general it is smaller, so this truncated sequence, does not compute $\adamson{1}{G}{H}{M}$. But in the 
particular case of this example, that does not happen because taking the reduced resolution \eqref{resolution-c4} tensored with the trivial coefficients $\dbZ$, we get that the homomorphism 
induced by the homomorphism before the augmentation is zero (see for instance \cite[Pag.~35]{Brown-COG}), so the lower sequence in diagram \eqref{eq:c4.c2} at the first place on the right, indeed computes $\adamson{1}{C_4}{C_2}{\dbZ}$, so we also get the comparison homomorphism $\varphi_1$.
\end{remark}

\bibliographystyle{alpha} 
\bibliography{newbib}

\end{document}